\newcommand\Z{\mathbb{Z}}
\newcommand{\IC}{{\mathbb C}}
\newcommand{\IN}{{\mathbb N}}
\newcommand{\IR}{{\mathbb R}}
\newcommand{\IZ}{{\mathbb Z}}
\newcommand{\cH}{{\mathcal H}}
\newcommand{\cK}{{\mathcal K}}
\newcommand{\cS}{{\mathcal S}}
\newcommand{\cU}{U}
\newcommand{\acts}{\curvearrowright}
\newcommand{\ve}{\varepsilon}
\DeclareMathOperator{\IE}{{\mathbb E}}
\DeclareMathOperator{\IP}{{\mathbb P}}
\DeclareMathOperator{\supp}{supp}
\DeclareMathOperator{\cspan}{\overline{span}}
\DeclareMathOperator{\Tr}{Tr}
\DeclareMathOperator{\Ad}{Ad}
\newcommand{\ip}[1]{\mathopen{\langle}#1\mathclose{\rangle}}
\newcommand{\ie}[1]{\operatorname{\mathbb E}\bigl[ #1 \bigr]}
\newtheorem{lemma}{Lemma}[section]
\newtheorem{proposition}[lemma]{Proposition}
\newtheorem{theorem}[lemma]{Theorem}
\newtheorem{corollary}[lemma]{Corollary}
\newtheorem{example}[lemma]{Example}
\newtheorem{rem}[lemma]{Remark}
\newtheorem{maintheorem}{Theorem}
\newtheorem*{corollaryint}{Corollary}
\theoremstyle{definition}
\newtheorem{remark}[lemma]{Remark}
\newtheorem{definition}[lemma]{Definition}
\title{Finite-Dimensional Representations constructed from Random Walks}
\author{Anna Erschler}
\address{A.E.: C.N.R.S., D\'epartement de math\'ematiques et applications, \'Ecole Normale Sup\'erieure, PSL research institute, 45, Rue d'Ulm, 75005, Paris, France}
\email{anna.erschler@ens.fr}
\author{Narutaka Ozawa}
\address{N.O.: Research Institute of Mathematical Sciences, Kyoto University,
Kitashirakawa-Oiwake, Kyoto 606-8502, Japan}
\email{narutaka@kurims.kyoto-u.ac.jp}
\thanks{
The work of the fist named author is partially supported by the ERC grant GroIsRan.
The work of the second named author is partially supported by
 JSPS KAKENHI Grant Number 26400114. He also expresses his gratitude to  the organizers of the programs "Classification
of operator algebras: complexity, rigidity, and dynamics'', "Von Neumann Algebras'',
the Mittag-Leffler Institute, the Hausdorff institute and Ecole Normale, Paris for
the hospitality during his visits. The authors would like to thank the referee for helpful remarks.}
\begin{document}
\maketitle
\begin{abstract}
 Given a $1$-cocycle $b$ with coefficients in an orthogonal representation, 
we show that every finite dimensional summand of $b$ is cohomologically trivial 
if and only if $\| b(X_n) \|^2/n$  tends to a constant in probability, 
where $X_n$ is the trajectory of the random walk $(G,\mu)$.
As a corollary, we obtain  sufficient conditions for $G$ to satisfy
Shalom's property $H_{\mathrm{FD}}$. Another application  is a convergence to a constant in probability of $\mu^{*n}(e) -\mu^{*n}(g)$, $n\gg m$,  normalized by its average with respect to $\mu^{*m}$, for any finitely generated infinite amenable group 
without infinite virtually abelian quotients. 
Finally, we show that the harmonic equivariant mapping of $G$ to a Hilbert space  obtained as an $U$-ultralimit of normalized $\mu^{*n}- g \mu^{*n}$ can depend 
on the ultrafilter $U$ for some groups. 
\end{abstract}

\section{introduction}

\subsection*{Convention}
Throughout the paper, $G$ is a compactly generated locally compact group 
with a distinguished relatively compact symmetric subset $Q$ which contains an open generating 
neighborhood $e$ of $G$, 
and $\mu$ is a symmetric probability measure on $G$ that satisfies 
the following conditions:
\begin{itemize}
\item
$\mu$ is absolutely continuous with respect to the Haar measure $m$,
\item
$\inf\{  \frac{d\mu}{dm}(x) : x\in Q\}>0$, 
\item
$\int |x|_G^d\,d\mu(x)<\infty$ for all $d$. 
\end{itemize}
Here $|x|_G := \min\{ n : x \in Q^n\}$ (except that $|e|_G:=0$). 
Note that $|\,\cdot\,|_G$ is a length function, that is, it satisfies 
$|x|_G=|x^{-1}|_G$ and  $|xy|_G\le |x|_G+|y|_G$.
Put $B_G(r) := \{ x\in G : |x|_G\le r\}$.

\subsection*{Formulation of the results}
Throughout the paper, we will work with real Hilbert spaces and orthogonal representations. 
This is purely for our convenience and all results (but not the proofs) hold true for 
complex Hilbert spaces and unitary representations (except that 
the statement of Theorem~\ref{theorem:martingaleclt} has to be slightly 
modified), because any complex Hilbert space $\cH_{\IC}$ 
is also a real Hilbert space with the real inner product $(v,w)\mapsto\Re\ip{v,w}_{\cH_{\IC}}$, 
and any $1$-cocycle (defined below) with coefficients in a unitary representation can 
be regarded as the one with coefficients in the corresponding orthogonal representation.

Let $\pi\colon G\acts\cH$ be an orthogonal representation on a real Hilbert 
space $\cH$.
Recall that a \emph{$1$-cocycle} (or simply a cocycle) 
is a continuous map $b\colon G\to\cH$ which satisfies 
the $1$-cocycle identity: $b(gx)=b(g)+\pi_gb(x)$ for all $g,x\in G$. 
It is a \emph{$1$-coboundary} if there is $v\in\cH$ 
such that $b(x)=v-\pi_xv$ for all $x\in G$. 
We note that $b$ is a $1$-coboundary if and only if it is bounded on $G$ 
(Proposition 2.2.9 in \cite{bhv}).
Every cocycle $b$ satisfies that $b(e)=0$ and 
$\| b(x)-b(y) \|=\| b(x^{-1}y)\|\le \|b\|_Q|x^{-1}y|_G$, 
where $\| b\|_Q :=\sup_{g\in Q}\| b(g) \|<\infty$. 

A cocycle $b$ is said to be \emph{$\mu$-harmonic} (or simply harmonic) 
if $\int b(gx)\,d\mu(x)=b(g)$ for all $g$, or equivalently $\int b(x)\,d\mu(x)=0$. 
Any cocycle $b$ gives rise to an affine isometric action 
$A\colon G \times\cH \to \cH$ by $A(g,v)=\pi_gv+b(g)$ (see Chapter~2 in \cite{bhv}).
Conversely, for any (affine) isometric action on a Hilbert space and 
a point $v\in\cH$, the map $b(g)=A(g,v)-v$ defines a $1$-cocycle, 
and harmonicity of this cocycle is same as harmonicity of 
the orbit map $g\mapsto A(g,v)$. 
Under an appropriate assumption on the decay of a non-degenerate measure  $\mu$, it is known that a compactly generated locally compact group $G$ admits a non-zero $\mu$-harmonic cocycle with respect to some orthogonal representation  if and only if $G$ does not  satisfy Kazhdan' property (T).
Existence of a non-zero harmonic cocycle  on 
groups which do not satisfy property (T) is proved by Mok (\cite[Cor. 0.1]{mok}), Korevaar and Schoen  \cite[Thm 4.1.2]{ks} for finitely presented groups (and not discrete definition of harmonicity)  and in general case (and discrete definition of harmonicity) by Shalom in  \cite[Thm 6.1]{shalom-invent}. We will give somewhat more constructive proof of 
this fact in Section~\ref{sec:bmu}.
 See also Gromov \cite[Section 3.6]{gromovrandomwalk},  \cite[Section 7A]{gromovasymptotic}  Fisher and Margulis \cite{fishermargulis}, 
Lee and Peres \cite[Thm 3.8]{leeperes},  Ozawa \cite{ozawa}  as well as the book by Bekka, de la Harpe, and Valette \cite{bhv} for a non-exhaustive list of references about this result.

We say that a $1$-cocycle $b$ is \emph{finite-dimensional} if 
the $\pi(G)$-invariant subspace $\cspan b(G)$ is finite-dimensional. 
If $\cH=\bigoplus_i\cH_i$ is some orthogonal decomposition of $\cH$ into $\pi(G)$-invariant 
subspaces, then $b=\bigoplus_i P_{\cH_i}b$ is a decomposition of $b$ into $1$-cocycles $P_{\cH_i}b$ 
(with respect to $\pi|_{\cH_i}$). We call each $P_{\cH_i}b$ a summand of $b$. 
We say that such summand is {\it cohomologically trivial} if it is a $1$-coboundary. 

Given a probability measure $\mu$ on $G$, 
let $X_n$ denote the trajectory of the random walk  $(G,\mu)$, that is, 
$X_n=s_1s_2\cdots s_n$ where increments $s_i \in G$ are independent and chosen with respect to $\mu$.
The corresponding probability measure and its expectation are 
denoted by $\IP$ and $\IE$.

The value of a Hilbert valued $\mu$-harmonic $1$-cocycle 
along a trajectory of the random walk $(G,\mu)$ is a martingale, 
and therefore 
\[
\ie{ \|b(X_n)\|^2 }= \sum_{k=1}^n \ie{ \|b(X_k)\|^2 - \|b(X_{k-1})\|^2 } = n \ie{ \|b(X_1)\|^2}.
\]
That is, the expected value $\frac{1}{n}\ie{\|b(X_n)\|^2}$ is equal to a constant, not depending on $n$. 
For any (not necessarily harmonic) $1$-cocycle $b$, the expected value $\frac{1}{n}\ie{\|b(X_n)\|^2}$ 
has a limit (see Lemma \ref{lem:harmonicpart}).
Theorem~\ref{thm:beta} below characterizes the case when the random variable 
$\frac{1}{n}\|b(X_n) \|^2$ tends to a constant.

\begin{maintheorem} \label{thm:beta}
Let $G$ be a compactly generated locally compact group with a probability measure $\mu$ 
on $G$ as in Convention. Let $b\colon G\to\cH$ be a  $1$-cocycle.  
Then the following conditions are equivalent:
\begin{enumerate}
\item\label{item:beta1} Any finite-dimensional summand of $b$ is cohomologically trivial. 
\item\label{item:beta2} $\frac{1}{n}||b(X_n)||^2$ tends to a constant  in probability. 
\end{enumerate}
Now assume moreover that $b$  is harmonic and put $c = \int_G \| b(x) \|^2\,d\mu(x)$.
Then the limit
\[
\beta := \lim_{n\to\infty} \frac{1}{2c^2}\ie{\bigl| \frac{\|b(X_n)\|^2}{n} - c \bigr|^2}
\]
always exists, and $\beta=0$ if and only if $(\ref{item:beta1})$ and $(\ref{item:beta2})$ hold. 
If $\beta \ne 0$, then $b$ has a cohomologically non-trivial finite-dimensional summand 
of dimension $\le 1/\beta$.
\end{maintheorem}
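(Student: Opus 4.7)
The plan is to reduce to the harmonic case, compute the limiting variance of $\|b(X_n)\|^2/n$ from the martingale structure, and identify the limit with a spectral quantity that vanishes exactly when $b$ has no nontrivial finite-dimensional summand. First, the standard harmonic decomposition lets us write $b = b_{\mathrm{harm}} + d$, where $d(g) := v - \pi_g v$ is a coboundary; since $d$ is bounded, its contribution to $\|b(X_n)\|^2/n$ vanishes in the limit. A harmonic coboundary is automatically zero: harmonicity of $d$ forces $Pv = v$ with $P := \int \pi_s \, d\mu(s)$, hence $\pi_g v = v$ for $g$ in the support of $\mu$, a generating set. So we may assume $b$ is harmonic, in which case (\ref{item:beta1}) is equivalent to $b$ having zero projection onto every finite-dimensional $\pi(G)$-invariant subspace.

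For harmonic $b$, $M_n := \|b(X_n)\|^2 - nc$ is a real martingale with increments
$A_k = \|b(s_k)\|^2 - c + 2\ip{b(X_{k-1}), \pi_{X_{k-1}} b(s_k)}$.
Using harmonicity, the cocycle identity, and symmetry of $\mu$, a direct conditional-expectation calculation gives
\[
\IE[M_n^2] = n\sigma_1^2 + 4 \sum_{k=1}^n \Tr\bigl(T S_{k-1}\bigr), \qquad S_k := \IE[b(X_k)\otimes b(X_k)] = \sum_{j=0}^{k-1} P^j(T),
\]
where $T := \int b(s)\otimes b(s)\,d\mu(s)$ is positive trace-class with $\Tr T = c$, and $P(A) := \int \pi_s A \pi_s^{-1}\,d\mu(s)$ is a self-adjoint contraction on $\mathrm{HS}(\cH)$. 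The spectral theorem for $P$ combined with the elementary identity $n^{-2}\sum_{j<n}(n-j)\lambda^j \to \tfrac{1}{2}\mathbf{1}_{\{\lambda = 1\}}$ yields
\[
\lim_{n\to\infty}\frac{\IE[M_n^2]}{n^2} = 2\,\|ET\|_{\mathrm{HS}}^2,
\]
where $E$ is the orthogonal projection in $\mathrm{HS}(\cH)$ onto $\{A : P(A) = A\}$, i.e., the commutant of $\pi(G)$. This establishes the existence of $\beta = \|ET\|_{\mathrm{HS}}^2/c^2$.

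The third step is structural. Any nonzero $A \in \mathrm{HS}(\cH)$ commuting with $\pi(G)$ has $A^*A$ a nonzero positive compact operator commuting with $\pi$, whose positive eigenspaces are finite-dimensional and $\pi(G)$-invariant; hence the commutant of $\pi(G)$ in $\mathrm{HS}(\cH)$ is supported on the sum $\cH^{\mathrm{fin}}$ of all finite-dimensional $\pi(G)$-invariant subspaces. Decomposing $\cH^{\mathrm{fin}} = \bigoplus_\alpha W_\alpha \otimes V_\alpha$ isotypically, $E$ preserves trace and positivity on each diagonal block, so $E(T) \ne 0$ iff $b_\alpha := P_{\cH_\alpha} b$ is nonzero for some $\alpha$, which establishes $\beta = 0 \Leftrightarrow (\ref{item:beta1})$. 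For the dimension bound, write $T_\alpha := \int b_\alpha(s)\otimes b_\alpha(s)\,d\mu(s)$ and $c_\alpha := \Tr T_\alpha$; then $E(T_\alpha) = \tilde A_\alpha \otimes I_{V_\alpha}$ with $\tilde A_\alpha \ge 0$ of trace $c_\alpha/\dim V_\alpha$, whence $\|E(T_\alpha)\|_{\mathrm{HS}}^2 \le c_\alpha^2/\dim V_\alpha$. Summing, $\beta c^2 \le \sum_\alpha c_\alpha^2/\dim V_\alpha$, and since $\sum c_\alpha^2 \le c^2$ this produces some $\alpha$ with $\dim V_\alpha \le 1/\beta$.

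Finally, (\ref{item:beta1}) $\Rightarrow$ (\ref{item:beta2}) is Chebyshev applied to $\beta = 0$. For the converse I would use Burkholder--Davis--Gundy with the bound $\IE\|b(X_n)\|^4 = \IE[M_n^2] + n^2 c^2 = O(n^2)$ to obtain $\IE[\langle M\rangle_n^2] = O(n^4)$, hence an $L^4$-bound on $X_n := \|b(X_n)\|^2/n$. The constant limit in probability must equal $c$ by uniform integrability of $X_n$, and uniform integrability of $(X_n - c)^2$ then upgrades convergence in probability to $L^2$-convergence, forcing $\beta = 0$. The main obstacle I anticipate is Step~3: identifying the mean-ergodic range of $P$ on $\mathrm{HS}(\cH)$ with the commutant of $\pi(G)$, and showing this commutant is supported on the finite-dimensional isotypic components --- the Hilbert--Schmidt compactness input is the one non-routine ingredient.
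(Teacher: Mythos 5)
Your plan reproduces the paper's strategy almost exactly: the paper also passes to $w := \int(b\otimes b)\,d\mu$ in $\cH\otimes\cH \cong \mathrm{HS}(\cH)$, applies the mean ergodic theorem to $T := \int \pi_g\otimes\pi_g\,d\mu$ to obtain the projection $P$ onto $(\pi\otimes\pi)$-invariants, and uses compactness of Hilbert--Schmidt operators to read off the finite-dimensional invariant subspaces from the spectral projections of $S_{Pw}$. Your Steps~2--3 (martingale variance identity, $\lim n^{-2}\ie{M_n^2} = 2\|ET\|^2_{\mathrm{HS}}$, compactness of the commutant) are exactly the paper's computation, merely phrased via the martingale $\|b(X_n)\|^2 - nc$ instead of a direct expansion of $\ie{\|b(X_n)\|^4}$ via the identity $\ie{\|b(X_n)\|^4}=\iint\|b(x)-b(y)\|^4\,d\mu^{*(n-1)}(x)\,d\mu(y)$, and the dimension bound via isotypic blocks is the paper's $\|Pw\|^2\le(\min_i\Tr E_i)^{-1}$ in a slightly different dress. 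Your anticipated ``main obstacle'' is in fact the paper's key step. For the converse (\ref{item:beta2})$\Rightarrow\beta=0$, you use BDG where the paper establishes $\sup_n n^{-d}\ie{\|b(X_n)\|^{2d}}<\infty$ for all $d$ directly (Lemma~2.4); both give the uniform integrability needed.

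There is, however, one genuine slip in Step~1. The orthogonal decomposition of $Z^1(G,\pi)$ is $\overline{B^1(G,\pi)}\oplus B^1(G,\pi)^\perp$: the non-harmonic part is only in the \emph{closure} of the coboundaries, so your claim ``$d(g)=v-\pi_gv$ is a coboundary; since $d$ is bounded, its contribution vanishes'' does not hold as stated --- a limit of coboundaries need not be bounded. The correct argument is the paper's Lemma~2.3: approximate $c\in\overline{B^1}$ by genuine coboundaries $c_m$, use $\frac{1}{n}\ie{\|c_m(X_n)\|^2}\le\|c_m\|^2_{L^2(\mu)}$ (a telescoping martingale bound, uniform in $n$), and deduce $\frac{1}{n}\ie{\|c(X_n)\|^2}\to0$; then Markov plus Cauchy--Schwarz on the cross term $\frac{2}{n}\ip{c(X_n),b_{\mathrm{harm}}(X_n)}$ reduces condition~(\ref{item:beta2}) for $b$ to that for $b_{\mathrm{harm}}$. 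A second small point: when you compute $\ie{A_k^2}$ conditionally, the cross term $2\ie{a_kb_k\mid\mathcal F_{k-1}}=4\ip{b(X_{k-1}),\pi_{X_{k-1}}u}$ with $u=\int(\|b(s)\|^2-c)b(s)\,d\mu(s)$ does \emph{not} vanish pointwise; it vanishes only after taking the outer expectation, via the substitution $x\mapsto x^{-1}$ and $b(x^{-1})=-\pi_{x^{-1}}b(x)$. You should say so explicitly, since it is what makes the claimed identity for $\ie{M_n^2}$ true.
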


A more precise version of Theorem \ref{thm:beta} will be given in Theorem~\ref{theorem:martingaleclt}, where we describe  the limit distribution of $\| b(X_n ) \|/\sqrt{n}$.
This theorem has the following corollary:

\begin{corollaryint}
Let $b$ be a harmonic cocycle. Then, 
$b$ is a direct sum of (possibly infinitely many) finite-dimensional cocycles
if and only if 
$\limsup_n \IP( \| b(X_n) \| < c\sqrt{n} ) > 0$
for every $c>0$.
\end{corollaryint}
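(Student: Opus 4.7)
The plan is to reduce the statement to Theorem~\ref{thm:beta} by decomposing $\cH$ into the ``discrete'' and ``continuous'' parts of $\pi$. Let $\cH_d\subseteq\cH$ be the closed linear span of all finite-dimensional $\pi(G)$-invariant subspaces, and set $\cH_c := \cH_d^{\perp}$; both are $\pi(G)$-invariant. Zorn's lemma (or the isotypic decomposition) exhibits $\cH_d$ as an orthogonal direct sum $\bigoplus_i \cH_i$ of finite-dimensional invariant subspaces, while $\cH_c$ contains no non-zero finite-dimensional invariant subspace. Split $b = b_d + b_c$ accordingly. A key observation to record first is that $b$ is a direct sum of finite-dimensional cocycles if and only if $b_c = 0$: in that case the decomposition $\cH_d = \bigoplus_i \cH_i$ together with the zero cocycle on $\cH_c$ exhibits $b$ as such a direct sum; conversely, the image of any direct sum of finite-dimensional summands must land in $\cH_d$.

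For the ``if'' direction I would argue by contradiction. If $b_c \ne 0$, then every finite-dimensional summand of $b_c$ is zero and therefore cohomologically trivial, so Theorem~\ref{thm:beta} applied to $b_c$ yields $\frac{1}{n}\|b_c(X_n)\|^2 \to c_c := \int_G \|b_c(x)\|^2\,d\mu(x)$ in probability. Moreover $c_c > 0$: otherwise $b_c$ vanishes $\mu$-a.e., and since $d\mu/dm$ is bounded below on $Q$ and $b_c$ is continuous, $b_c \equiv 0$ on $Q$; the cocycle identity and the fact that $Q$ generates $G$ then force $b_c \equiv 0$. Picking $c \in (0,\tfrac{1}{2}\sqrt{c_c})$, the inequality $\|b(X_n)\|^2 \ge \|b_c(X_n)\|^2$ gives $\IP(\|b(X_n)\| < c\sqrt{n}) \to 0$, contradicting the hypothesis.

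For the ``only if'' direction, assume $b_c = 0$, write $b = \bigoplus_i b_i$ with $b_i$ a finite-dimensional harmonic cocycle into $\cH_i$, and set $c_i := \int \|b_i\|^2\,d\mu$, so that $\sum_i c_i = c < \infty$. Fix $c_0 > 0$ and split $b = b_{\le N} + b_{> N}$. Markov's inequality bounds the tail by
\[
\IP\bigl(\|b_{>N}(X_n)\|^2 \ge c_0^2 n/2\bigr) \;\le\; \frac{2}{c_0^2}\sum_{i > N} c_i,
\]
which can be made arbitrarily small by choosing $N$ large. For the head, $b_{\le N}$ is a finite-dimensional harmonic cocycle, so the martingale CLT (Theorem~\ref{theorem:martingaleclt}) yields weak convergence $b_{\le N}(X_n)/\sqrt{n} \Rightarrow Z_{\le N}$ to a centered Gaussian $Z_{\le N}$ in $\bigoplus_{i \le N} \cH_i$. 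Since $\sum_i \IE\|Z_i\|^2 \le \sum_i c_i < \infty$, the compatible family $\{Z_{\le N}\}$ assembles into a centered Gaussian $Z$ in $\cH_d$, and the classical small-ball property for Hilbert-valued centered Gaussians gives $p := \IP(\|Z\|^2 < c_0^2/2) > 0$. Because $\|Z_{\le N}\|^2 \le \|Z\|^2$ this yields $\IP(\|Z_{\le N}\|^2 < c_0^2/2) \ge p$ uniformly in $N$. Choosing $N$ so that the tail bound above is at most $p/2$ and combining the two events gives $\liminf_n \IP(\|b(X_n)\|^2 < c_0^2 n) \ge p/2 > 0$, as required.

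The main obstacle I expect is the ``only if'' direction when the direct sum is genuinely infinite: the small-ball probability of the finite head $b_{\le N}$ could a priori decay to zero as $N$ grows, and the uniform lower bound needed is secured by invoking the small-ball property of the full infinite-dimensional Gaussian limit $Z$ rather than of any of its finite truncations.
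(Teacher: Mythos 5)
Your argument is correct and follows the same route as the paper: the implicit proof in the text is a direct application of Theorem~\ref{theorem:martingaleclt} (the limit distribution $\chi(\theta,\sigma_k)$ has full small-ball support at zero if and only if $\theta=\|E_\infty b\|_{L^2(\mu)}=0$, i.e.\ if and only if $b$ has no weakly mixing summand), combined with the two Portmanteau inequalities for weak convergence. Your head/tail splitting in the ``only if'' direction re-derives, in a slightly more hands-on way, part of what the proof of Theorem~\ref{theorem:martingaleclt} already establishes; in particular, you could shortcut by applying that theorem directly to $b=b_d$, observing $\theta=0$, and invoking the small-ball property of the resulting infinite-dimensional Gaussian $\chi(0,\sigma_k)$ -- the same Gaussian small-ball fact you invoke for your assembled $Z$. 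Your ``if'' direction via Theorem~\ref{thm:beta} applied to the weakly mixing summand $b_c$ is equivalent to reading off $\theta=\|b_c\|_{L^2(\mu)}>0$ from Theorem~\ref{theorem:martingaleclt}.
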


Recall that a group $G$ is said to have \emph{Shalom's property 
$H_\mathrm{FD}$} if every orthogonal representation $\pi$ with 
non-zero reduced cohomology group $\overline{H^1}(G,\pi)$ 
contains a non-zero finite-dimensional subrepresentation. 
In Corollary \ref{cor:beta} we show that $G$ satisfies Shalom's property 
$H_\mathrm{FD}$ if at least one of the two following conditions hold:
either $\liminf_n\| \mu^{*n} - \mu^{*(1+\delta)n} \|_1<2$ for some $\delta>0$ or  $\limsup_n\mu^{*n}(B_G(c\sqrt{n}))>0$ for all $c>0$.

Theorem \ref{thm:beta} and its corollaries develop the argument from \cite{ozawa}.
While the main result of \cite{ozawa} is a new proof of Gromov's polynomial growth theorem, the paper also provides 
a more general  criterion for the property $H_{FD}$ for a finitely generated group 
in terms of convolutions 
of random walks is given in Section~$4$ of \cite{ozawa}.  It is shown in \cite{erschler} that wreath products of $\Z$ with finite groups satisfy the assumption of that criterion, providing examples of groups of super-polynomial growth where the criterion applies.
The assumption of the criterion  from Section~$4$  in \cite{ozawa} uses shifted convolution, 
and it is not clear whether this assumption is defined 
by an unmarked Cayley graph of $G$. 
Assume that $(G,\mu)$ is a simple random walk on $G$, that is, 
$\mu$ is equidistributed on a finite generating set of $G$.
The conditions of $(1)$ as well as of $(2)$ of Corollary \ref{cor:beta} 
are clearly defined by the unmarked Cayley graph of $G$. 
We do not know  any group which satisfies the assumption of (1) or of (2) of Corollary \ref{cor:beta}
and for which we know that it violates the assumption of Section 4 of \cite{ozawa}.  
But the conditions of  Corollary \ref{cor:beta}
are easier to check than the assumption from \cite{ozawa}. 
For example, it is easily applicable to solvable Baumslag--Solitar groups,  lamplighter groups $\IZ\ltimes\bigoplus_{\IZ} F$ with $F$ finite,   or to polycyclic groups obtained as extension of ${\IZ}^2$ by $M \in SL(2,d)$ with eigenvalues of absolute value $\ne 1$. See Section \ref{sec:appli} for more examples.
We do not know any group which satisfies Shalom's property and does not satisfy the assumption of  Corollary \ref{cor:beta}.

Given a not necessarily harmonic cocycle $b$ on a group without property (T), a harmonic cocycle can be obtained taking averages of $b$ (see Mok, Korevaar Schoen, Shalom \cite{mok,ks, shalom}, and in particular this can be achieved averaging with respect to a  probability measure $\mu$ (see e.g. Gromov, Lee--Peres \cite{gromovrandomwalk, leeperes}). In Section \ref{sec:bmu} we study the cocycles $b_{\mu,U}$, constructed  as a ultralimit in $\ell_2(G)$ of normalized $\mu^{*n}- g \mu^{*n}$ 
on a finitely generated amenable group $G$. 
Kesten's criterion \cite{kesten} (see also  \cite{avez}) 
implies that $\mu^{*n}$ is a sequence of almost invariant vectors in $\ell_2(G)$, and  
one can moreover show (see Theorem \ref{thm:bmu}) that the limit is a harmonic $1$-cocycle.
Applying Theorem \ref{thm:beta} to this $1$-cocycle, one obtains

\begin{maintheorem}\label{theoremcorollary}
Let $G$ be a finitely generated infinite amenable group without virtually 
abelian infinite quotients. 
Let $\mu$ be a finitely-supported  symmetric non-degenerate probability measure.
Then $(\mu^{*2n}(e)  -\mu^{*2n}(X_{2m}))/\alpha(m,n)$ tends to a constant 
in probability $\mu^{*2m}$ as $m\to\infty$ and $n\gg m$. 
Here $\alpha(m,n)=  \mu^{*2n}(e) - \mu^{*2n+2m}(e)$ 
is the average of $\mu^{*2n}(e)  -\mu^{*2n}(g)$
with respect to $\mu^{*2m}$. Namely 
\[
\lim_{m\to\infty}\limsup_{n\to\infty}\IE\left|\frac{\mu^{*2n}(e) - \mu^{*2n}(X_{2m})}{\mu^{*2n}(e) - \mu^{*2n+2m}(e)} - 1 \right| = 0.
\]
\end{maintheorem}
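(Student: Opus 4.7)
The plan is to apply Theorem~\ref{thm:beta} to the harmonic cocycle $b=b_{\mu,U}$ on $G$ built in Section~\ref{sec:bmu}. Recall that $b_{\mu,U}$ arises as a $U$-ultralimit of the normalised vectors $\tilde b_n(g):=(\mu^{*n}-g\mu^{*n})/a_n\in\ell^2(G)$; taking $a_n^2:=2(\mu^{*2n}(e)-\mu^{*(2n+1)}(e))$ makes $b_{\mu,U}$ harmonic with $\int_G\|b_{\mu,U}\|^2\,d\mu=1$, so $c=1$ in the notation of Theorem~\ref{thm:beta}, and
\[
\|b_{\mu,U}(g)\|^2=\lim_{n\to U}\frac{\mu^{*2n}(e)-\mu^{*2n}(g)}{\mu^{*2n}(e)-\mu^{*(2n+1)}(e)}\quad(g\in G).
\]

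I first check that every finite-dimensional summand $b'$ of $b_{\mu,U}$ is a coboundary. Splitting the representation space $\cH'$ of $b'$ into the $\pi'(G)$-fixed subspace $\cH_{\mathrm{inv}}$ and its orthogonal complement $\cH_{\mathrm{perp}}$, the part $b'|_{\cH_{\mathrm{perp}}}$ is bounded---since $G$ is amenable and $\cH_{\mathrm{perp}}$ is a finite-dimensional orthogonal representation with no non-zero invariant vector---hence a coboundary; so if $b'$ were non-trivial, its projection $b'|_{\cH_{\mathrm{inv}}}$ would be a non-zero additive homomorphism $G\to\IR^k$, producing an infinite abelian quotient of $G$, contrary to assumption. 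Therefore Theorem~\ref{thm:beta} applies and gives $\|b_{\mu,U}(X_n)\|^2/n\to 1$ in probability; since this non-negative random variable has constant mean $1$, convergence upgrades to $L^1$: $\IE\bigl|\|b_{\mu,U}(X_{2m})\|^2/(2m)-1\bigr|\to 0$ as $m\to\infty$.

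Next I identify the statement with its cocycle counterpart. Because $\mu$ is finitely supported, $X_{2m}$ takes only finitely many values for each fixed $m$. Dividing numerator and denominator by $a_n^2$ yields the pointwise ultralimit
\[
\lim_{n\to U}\frac{\mu^{*2n}(e)-\mu^{*2n}(g)}{\mu^{*2n}(e)-\mu^{*(2n+2m)}(e)}=\frac{\|b_{\mu,U}(g)\|^2}{2m},
\]
since the denominator equals $\IE[\mu^{*2n}(e)-\mu^{*2n}(X_{2m})]$, which after $a_n^2$-normalisation tends to $\tfrac12\IE[\|b_{\mu,U}(X_{2m})\|^2]=m$ by the harmonicity (martingale) identity. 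A finite-sum dominated convergence then gives, for each fixed $m$,
\[
\lim_{n\to U}\IE\Bigl|\tfrac{\mu^{*2n}(e)-\mu^{*2n}(X_{2m})}{\mu^{*2n}(e)-\mu^{*(2n+2m)}(e)}-1\Bigr|=\IE\Bigl|\tfrac{\|b_{\mu,U}(X_{2m})\|^2}{2m}-1\Bigr|,
\]
which tends to $0$ as $m\to\infty$ by the previous step.

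The main obstacle is to promote the convergence obtained for each individual ultrafilter $U$ to the uniform $\limsup_n$ required by the theorem. Were the conclusion false, there would exist $\ve>0$, $m_k\to\infty$ and $n_k\gg m_k$ with
\[
\IE\Bigl|\tfrac{\mu^{*2n_k}(e)-\mu^{*2n_k}(X_{2m_k})}{\mu^{*2n_k}(e)-\mu^{*(2n_k+2m_k)}(e)}-1\Bigr|\ge\ve;
\]
fixing a non-principal ultrafilter $U$ containing $\{n_k\}$ and comparing this diagonal statistic with $\IE\bigl|\|b_{\mu,U}(X_{2m_k})\|^2/(2m_k)-1\bigr|$---which vanishes as $k\to\infty$ by Theorem~\ref{thm:beta} applied to $b_{\mu,U}$---should yield a contradiction. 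Making this diagonal comparison quantitative is the delicate part and relies on the uniform Lipschitz estimate $\|b_{\mu,U}(g)\|\le|g|_G/\sqrt{\mu_*}$ with $\mu_*:=\min_{s\in\supp\mu}\mu(s)>0$ (valid uniformly in $U$ thanks to the finite support of $\mu$), together with the Stone--\v Cech continuity of $U\mapsto\|b_{\mu,U}(g)\|^2$.
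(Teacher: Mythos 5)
Your starting point---apply Theorem~\ref{thm:beta} to the harmonic cocycle $b_{\mu,U}$ from Section~\ref{sec:bmu} and then translate $\|b_{\mu,U}(g)\|^2$ into the ratio of convolution differences---is exactly the paper's strategy, and your reduction of the statement to $\IE\bigl|\|b_{\mu,U}(X_{2m})\|^2/(2m)-1\bigr|\to 0$ is correct. However, there are two problems, one concrete error and one genuine gap.

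First, the step where you argue that every finite-dimensional summand of $b_{\mu,U}$ is trivial rests on a false claim: it is not true that, for an amenable group $G$, every finite-dimensional orthogonal representation without non-zero invariant vectors has vanishing $H^1$. Take $G=D_\infty=\langle a,s\mid s^2=e,\ sas=a^{-1}\rangle$, the one-dimensional representation $\pi(a)=1$, $\pi(s)=-1$ (no invariant vector), and the cocycle with $b(a)=1$, $b(s)=0$; then $b(a^n)=n$ is unbounded. The point is that triviality of finite-dimensional harmonic cocycles for groups without virtually abelian infinite quotients is exactly Shalom's Theorem~4.3.1 (cited in the paper), which requires passing to a finite-index subgroup where the ``non-invariant'' part becomes invariant; your orthogonal split into $\cH_{\mathrm{inv}}\oplus\cH_{\mathrm{perp}}$ misses this. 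Of course $D_\infty$ itself is excluded by the hypotheses of Theorem~\ref{theoremcorollary}, but the intermediate lemma you assert is wrong, so the argument as written does not establish the needed fact; you should simply invoke Shalom's theorem.

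Second, and more seriously, the final paragraph leaves the crux of the proof unfinished, and the sketch you give does not appear to close the gap. For a fixed ultrafilter $U$, Theorem~\ref{thm:beta} gives you $\IE\bigl|\|b_{\mu,U}(X_{2m_k})\|^2/(2m_k)-1\bigr|\to 0$, but this quantity is $\lim_{n\to U}$ (taken over \emph{all} $n$, with $m_k$ held fixed) of the ratio statistic, whereas what you must control is that statistic evaluated at the single diagonal point $(m_k,n_k)$. Uniform Lipschitz bounds and Stone--\v Cech continuity in $U$ for each fixed $g$ do not bridge this mismatch between the two indices. The paper instead proves a \emph{uniform} bound over all normalized $\mu$-harmonic cocycles $c$: assuming the contrary, one has $c_m$ and $k_m\to\infty$ with $\|T_m^{k_m}w_m\|\ge\ve$, takes the ultralimit cocycle $c_{\cU}$ of the sequence $(c_m)_m$, and uses the key monotonicity $\ip{T_m^{2k}w_m,w_m}\ge\ip{T_m^{2k_m}w_m,w_m}$ for $k\le k_m$ (since $T$ is a self-adjoint contraction and one is looking at even powers) to deduce $\ip{T_{\cU}^{2k}w_{\cU},w_{\cU}}\ge\ve^2$ for all $k$, hence a non-zero $\Ad\pi$-invariant vector and a non-trivial finite-dimensional summand, contradicting Shalom. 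This monotonicity trick, which lets one compare across the diagonal $k_m$, is precisely what is missing from your sketch; without an analogue of it the uniformity does not follow. The final passage from the uniform bound to the statement then uses $\limsup_n\lambda_n=\sup_U\lim_U\lambda_n$ and the fact that each $b_{\mu,U}$ is harmonic and normalized, together with Lemma~\ref{lem:auxiliary} to replace the normalization $\mu^{*n}(e)-\mu^{*n+1}(e)$ by $\mu^{*n}(e)-\mu^{*n+m}(e)$.
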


Take $n$ much larger than $m$. Observe that a  group  is amenable if and only if $\mu^{*2n}(g)/\mu^{*2n}(e)$ is close to $1$ in probability with respect to $\mu^{*2m}$.  Theorem~\ref{theoremcorollary} gives a sufficient condition for the concentration of the second order term of $\mu^{*2n}$.

Theorem~\ref{theoremcorollary} applies in particular to 
any finitely generated amenable torsion group (such as
Grigorchuk groups $G_w$ \cite{grigorchukdegrees}) or 
to any finitely generated amenable simple group (such as commutator full topological groups of minimal shifts on $\Z$ (which are simple by a result of Matui \cite{matui} and amenable  by a result of Juschenko--Monod \cite{juschenkomonod}), or to simple groups of intermediate growth constructed recently by Nekrashevych \cite{nekrashevych}.
If $\mu$ is equidistributed on a finite generating set of $G$, then the assumption of Theorem \ref{theoremcorollary} depends only on the unmarked Cayley graph of $(G,\mu)$.
In particular, the theorem gives a necessary condition for an amenable  group to be simple in terms of  unmarked Cayley graphs. In general, it is known that the property of being simple can not be defined by the unmarked Cayley graphs, as it is shown by Burger and Mozes \cite{burgermozes} (their examples are isometric to product of two trees and they are non-amenable). It is to our knowledge an open problem whether a property of being a torsion group can be verified geometrically.

Geometric group theory tries to recover properties of a group from the word metrics of this group. Given a group $G$, generated by a finite set $S$, its action on a metric space $X$ and a  point $x_0\in X$, the group $G$ is equipped with two metrics: the word metric $d_{G,S}(g,h)$ as well as $d_{X,x_0}(g,h)= d_X(gx_0, hx_0)$. It seems interesting to study which properties of the action, or of the group $G$, can be recovered from these two metrics. Theorem A as well as Corollary 2.5 provide examples of such situation, for $X$ being a Hilbert space and a group $G$ acting by affine transformations of $X$.

Fix a non-principal ultrafilter $U$ on the natural numbers $\IN$.
Let $b^{p,q}_{\mu,U}$ be the mapping to a vector space equipped with a metric, 
constructed as $U$ ultralimit of normalized  $ (\mu^{*n})^q - g(\mu^{*n})^q$, 
considered as elements of $\ell_p(G)$ (see Section~\ref{sec:pthin}). 
This means that  we divide  $g (\mu^{*n})^q - (\mu^{*n})^q$  by  the $l_p$ norm of this expression, considered as a function on $g$, and then we take the ultralimit with respect to $U$.
By the construction,  the $l_p$ norm of  $b^{p,q}_{U,\mu,G}$ is one.
We recall that any ultralimit of Hilbert spaces is a Hilbert space, so that for $p=2$ and any $q\ge 0$ we obtain a cocycle with respect to some orthogonal 
representation of $\cH$. 
 In particular, for $q=1$ and $p=2$ , $b^{p,q}_{\mu,U}$ coincides up to a multiplicative constant with  the harmonic cocycle  $b_{\mu,U}$, studied in the proof of Theorem B in Section \ref{sec:bmu}.
In general, for $p\ne 2$, we obtain a cocycle with respect to some isometric 
representation on an abstract $L_p$-space.   

 In Theorem \ref{dependencesubsequence} below we show that the  cocycles   $b^{p,q}_{\mu,U}$,  $p\ge 1$, $q\ge 0$ (in particular, the harmonic cocycle $b_{\mu,  U}$)  can depend on the choice of a non-principal ultrafilter $U$.

\begin{maintheorem}  \label{dependencesubsequence}
Take $p=1$ or $2$ and $q=0$, $1$, or $2$.
 For any $D\ge 2$ there exist torsion groups $G_1, G_2, \ldots, G_D$ such that the following holds. Consider finitely supported symmetric non-degenerate measures $\mu_i$ on $G_i$ and put $G= \prod_{j=1}^D G_i$ and $\mu= \prod_{j=1}^D \mu_i$.
For each $j=1,\ldots,D$
there exists a non-principal ultrafilter $U$ such that 
the limiting cocycle $b^{p,q}_{\mu,U}$ factors through $G\twoheadrightarrow G_i$.  
\end{maintheorem}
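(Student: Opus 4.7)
The strategy is to exploit the multiplicative factorization of the random walk kernel on a direct product $G=\prod_i G_i$ with product measure $\mu=\prod_i\mu_i$, combined with a choice of torsion groups whose heat kernel behaviour oscillates enough at different scales that one can make any single factor dominate the normalization along an appropriate subsequence.

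First I would record the factorization. On $G=\prod_{i=1}^D G_i$ with $\mu=\prod_i\mu_i$, the iterates satisfy $\mu^{*n}(h_1,\dots,h_D)=\prod_i\mu_i^{*n}(h_i)$, and hence $(\mu^{*n})^q(h)=\prod_i(\mu_i^{*n})^q(h_i)$. For $g=(e,\dots,e,g_j,e,\dots,e)$ concentrated in the $j$-th factor, the function $g(\mu^{*n})^q-(\mu^{*n})^q$ separates multiplicatively, and its $\ell_p(G)$-norm factors as
\[
\|g_j(\mu_j^{*n})^q-(\mu_j^{*n})^q\|_{\ell_p(G_j)}\cdot\prod_{i\neq j}\|(\mu_i^{*n})^q\|_{\ell_p(G_i)}.
\]
Writing $A_i(n):=\|(\mu_i^{*n})^q\|_{\ell_p(G_i)}$ and $D_i(n,g_i):=\|g_i(\mu_i^{*n})^q-(\mu_i^{*n})^q\|_{\ell_p(G_i)}$, the cocycle identity decomposes the pre-normalized ultralimit defining $b^{p,q}_{\mu,U}(g)$ for a general $g=(g_1,\dots,g_D)$ into a sum of $D$ contributions, the $i$-th of $\ell_p$-norm of order $D_i(n,g_i)\prod_{k\neq i}A_k(n)$. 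After normalizing by one common scalar, the surviving contributions in the ultralimit are precisely those $i$ for which the ratio $D_i(n,g_i)/A_i(n)$ is maximal along $U$.

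Second, I would choose torsion groups $G_1,\dots,G_D$ whose random walks have highly variable behaviour at different scales, for instance Grigorchuk groups $G_{\omega_i}$ with interleaved defining sequences $\omega_i\in\{0,1,2\}^{\IN}$ (or Bartholdi--Erschler-type torsion groups calibrated to produce prescribed oscillations), so that for each prescribed $j\in\{1,\dots,D\}$ there is a subsequence $n_k\to\infty$ along which
\[
\frac{D_j(n_k,s_j)}{A_j(n_k)}\;\gg\;\frac{D_i(n_k,s_i)}{A_i(n_k)}\qquad(i\neq j),
\]
uniformly in the generators $s_i,s_j$. Refining to a non-principal ultrafilter $U$ containing such a subsequence and passing to the limit then annihilates all factors except the $j$-th, so $b^{p,q}_{\mu,U}(g)$ depends only on $g_j$, i.e.\ factors through $G\twoheadrightarrow G_j$.

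The main obstacle is the construction of torsion groups with the prescribed oscillatory estimates. What one needs is, for each $i$, long time-windows on which $\mu_i^{*n}$ is nearly $G_i$-invariant (so that $D_i/A_i$ is arbitrarily small) separated by windows on which this ratio is bounded below, and the windows of different indices must be interleaved so that for any $j$ one can catch $G_j$ in an active window while every $G_i$ ($i\neq j$) is in a quiet one. Such fine control of heat-kernel and gradient behaviour for torsion groups is obtainable from Grigorchuk--Bartholdi--Erschler constructions by calibrating the defining sequences; this simultaneously handles the allowed combinations $p\in\{1,2\}$ and $q\in\{0,1,2\}$, since each of $A_i(n)$ and $D_i(n,s_i)$ is governed by the same underlying heat-kernel data.
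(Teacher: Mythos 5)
Your reduction is sound and matches the paper's: on $G=\prod_i G_i$ with $\mu=\prod_i\mu_i$ the kernel factors multiplicatively, so the question becomes which factor's normalized gradient ratio $D_i(n,\cdot)/A_i(n)$ dominates along the ultrafilter; this is essentially Remark~\ref{rem:cocyclesproduct} of the paper. Where the proposal has a real gap is in the construction of the torsion groups, which is the heart of the theorem. You suggest Grigorchuk groups $G_\omega$ with interleaved defining sequences, but these are all amenable of subexponential growth, so for every such $G_\omega$ the ratio $D/A$ tends to $0$; you would then need to compare two vanishing sequences and show one vanishes dramatically faster than the other along a prescribed subsequence, uniformly over the finitely many factors and over $p\in\{1,2\}$, $q\in\{0,1,2\}$. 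Nothing in the known oscillating-growth constructions for $G_\omega$ gives this kind of multi-factor, simultaneously calibrated separation, and you do not indicate how it would be achieved.

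The paper instead uses piecewise automatic groups with returns $G_{\tau_1,\tau_2}(t_i,T_i)$ in the sense of \cite{erschlerpiecewise}: these are torsion groups whose balls at scale $\Psi(t_i)$ coincide with balls of a comparison group embedded in a power of the Grigorchuk group (hence amenable, subexponential, trivial boundary — $D/A$ small), while their balls at scale $\Psi(T_i)$ coincide with balls of a comparison group surjecting onto a \emph{non-amenable} group (exponential growth, nontrivial boundary, spectral gap — $D/A$ bounded below at that scale). This is a qualitatively stronger contrast than anything obtainable from varying $\omega$ alone, and it is exactly what drives all three cases $q=0,1,2$ via the three parts of Lemma~\ref{directproducts} simultaneously. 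The proof then runs a nontrivial interleaved induction on the $D$ parameter sequences $(t_i^j, T_i^j)$ so that at each stage exactly one factor is in its ``non-amenable window'' while the others are deep in ``amenable windows''; this cyclic scheduling is what lets you catch each $j$ on some subsequence. Your proposal names the target estimate but gives no mechanism for producing the bounded-below windows, nor the interleaving argument; as written, the construction step does not go through.
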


Theorem    \ref{dependencesubsequence} shows 
in particular that  there exist at least $D$ mutually distinct limiting cocycles among $\{b^{p,q}_{\mu,U} : U \}$, and at least $D$ mutually distinct subgroups among possible  kernels of such cocycles. Such groups $G$ admit $g_1, g_2 \in G$ such that the ratio
$(\mu^{*2n}(e)-\mu^{*2n}(g_1))/ (\mu^{*2n}(e) -\mu^{*2n} (g_2))$ does not have a limit as $n \to \infty$.

The groups $G_i$ are constructed as piecewise automatic groups \cite{erschlerpiecewise}, they can be chosen to be of sub-exponential word growth, but in such a way that for each $j$ the group $G_j$ is in some sense very close to a non-amenable group on some scale while on this particular scale it does not happen to other $G_k$, $j\ne k$. The contribution to $b^{p,q}_{\mu,U}$ is mainly from $G_j$ on this scale, and  the kernel of $b^{p,q}_{\mu,U}$ contains $\prod_{k\ne j} G_k$.

The kernels of cocycles $b^{p,q}_{\mu,U}$ are particular cases of what we call $\ell_p$-thin subgroups: this is a natural family of subgroups, related to the shifts $(\mu^{*n})^q$
(see Definition \ref{definitionsubgroup}), which for $p=2$, $q=1$ is related to amenability, for $p=q=1$ to Poisson-Furstenberg boundary and for $q=0$, $p\ge 1$ to growth of groups
(see  Lemma \ref{directproducts}), these groups in some situation may depend  on $p$  (see Example \ref{dependancep}) and on the measure $\mu$ (see Remark \ref{rem:notnormal}).

Since the group $G$ in the statement of the theorem is a torsion group, it does not admit a virtual quotient to an infinite cyclic group. In particular, taking $p=2$ we can apply the conclusion of Theorem \ref{theoremcorollary}
 to 
$(G,\mu)$ to claim that $\mu^{*n}(e) -\mu^{*n}(g)$, normalised by its average $\alpha(m,n)$ is close to a constant in probability $\mu^{*m}$, for $n \gg m$. In other words,  for each $n\gg m$ $\mu^{*m}$ is concentrated on a set where
normalized $\mu^{*n}(e) -\mu^{*n}(g)$ is close to its mean value, but in view of \ref{dependencesubsequence}  these sets may depend essentially on $n$.

We are grateful to Pierre de la Harpe for comments on the preliminary version of this paper.

\section{Harmonic cocycles and finite-dimensional summands}\label{sec:harmonic}

We now recall from Sections 4 and 5 in \cite{guichardet} 
that the space $Z^1(G,\pi)$ of $1$-cocycles is a Hilbert space
under the norm 
\[
\| b \|_{L^2(\mu)} := \Bigl(\int_G \| b(x) \|^2\,d\mu(x)\Bigr)^{1/2},
\]
and it decomposes into an orthogonal direct sum of
approximate $1$-coboundaries and $\mu$-harmonic $1$-cocycles.
We will say $b$ is \emph{normalized} when $\| b \|_{L^2(\mu)}=1$. 

\begin{lemma} \label{lemmaequivnorms}
The space $Z^1(G,\pi)$ of $1$-cocycles is a Hilbert space with respect 
to the norm $\|\,\cdot\,\|_{L^2(\mu)}$. 
Moreover the norms  $\|\,\cdot\,\|_{L^2(\mu)}$ and  $\|\,\cdot\,\|_Q$ are equivalent.
\end{lemma}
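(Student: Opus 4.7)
The plan is to prove both claims together by showing $(Z^1(G,\pi),\|\cdot\|_Q)$ and $(Z^1(G,\pi),\|\cdot\|_{L^2(\mu)})$ are each Banach spaces with the identity bounded in one direction; the open mapping theorem then yields equivalence.

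\textbf{Easy direction and non-degeneracy.} The Lipschitz estimate $\|b(x)\|\le\|b\|_Q|x|_G$ combined with the second-moment hypothesis gives
\[
\|b\|_{L^2(\mu)}^2\le\|b\|_Q^2\int_G|x|_G^2\,d\mu(x),
\]
so the identity $(Z^1,\|\cdot\|_Q)\to(Z^1,\|\cdot\|_{L^2(\mu)})$ is bounded. If $\|b\|_{L^2(\mu)}=0$, the lower density bound $d\mu/dm\ge c>0$ on $Q$ forces $b=0$ $m$-a.e.\ on $Q$; continuity and the cocycle identity (with $Q$ generating $G$) then give $b\equiv 0$. Completeness under $\|\cdot\|_Q$ is routine: a Cauchy sequence converges uniformly on $Q$, hence on each $Q^k$ by the Lipschitz estimate, to a continuous cocycle.

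\textbf{Completeness under $\|\cdot\|_{L^2(\mu)}$.} This is the core step. Given a Cauchy sequence $(b_n)$ with $L^2(\mu)$-limit $f\in L^2(G,\mu;\cH)$, pass to a subsequence with $b_n\to f$ $\mu$-a.e.; the lower density bound promotes this to $m$-a.e.\ convergence on an $m$-conull subset $E\subset Q$. Let $V\subset Q$ be the open symmetric generating neighborhood of $e$. For any $g\in V$, the set $gV\cap V$ is nonempty and open (its nonemptiness coming from $g\in V\subset V\cdot V^{-1}$), so $V\cap g^{-1}V$ has positive $m$-measure; by left-invariance of $m$, so does $E\cap g^{-1}E\cap V$. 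Picking any $h$ in it, the cocycle identity $b_n(g)=b_n(gh)-\pi_g b_n(h)$ forces convergence of $(b_n(g))$. Since $V$ generates $G$, iterating the cocycle identity extends pointwise convergence to all of $G$. The pointwise limit $b$ is a cocycle agreeing with $f$ in $L^2(\mu)$; its continuity follows from the convolution identity
\[
b(g)=\int_G b(gh)\varphi(h)\,dm(h) - \pi_g\int_G b(h)\varphi(h)\,dm(h)
\]
for $\varphi\in C_c(G)$ supported in $V$ with $\int\varphi\,dm=1$, whose right-hand side is continuous since $b\in L^2_{\mathrm{loc}}(m)$ (by the density bound on $Q$ combined with controlling $b$ on translates $gQ$ via the cocycle identity).

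\textbf{Equivalence.} Both norms being complete and the identity continuous in one direction, the open mapping theorem yields the reverse inequality $\|b\|_Q\le C\|b\|_{L^2(\mu)}$. The main obstacle throughout is the $\|\cdot\|_{L^2(\mu)}$-completeness: one must transport $\mu$-almost-everywhere convergence on $Q$ to genuine pointwise convergence on $G$ through the cocycle identity, and then verify continuity of the limit via convolution smoothing; both steps hinge on the positive density hypothesis on $Q$.
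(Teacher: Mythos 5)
Your proof is correct in outline, but it takes a genuinely different route from the paper. You prove completeness of $Z^1(G,\pi)$ in the $\|\cdot\|_{L^2(\mu)}$-norm (via a.e.\ convergence, the positive-density hypothesis, extension through the cocycle identity, and convolution smoothing) and then invoke the open mapping theorem. The paper in fact mentions this OMT route (citing automatic continuity of measurable locally integrable cocycles), but what you have done is supply the missing proof of that automatic-continuity/completeness step that the paper leaves as a citation. The paper's own proof, following Guichardet, is shorter and more constructive: for $g$ in a small open generating set $U$ it produces the explicit bound
\[
\|b(g)\|^2 = m(V)^{-1}\int_V \|b(gx)-\pi_g b(x)\|^2\,dm(x) \le 16\ve^{-1}m(V)^{-1}\|b\|_{L^2(\mu)}^2,
\]
where $V^2\subset U$ and $\ve=\inf_{UV}\frac{d\mu^{*2}}{dm}>0$; writing $Q\subset U^N$ then gives $\|b\|_Q\lesssim\|b\|_{L^2(\mu)}$ with an explicit constant, and equivalence (hence Hilbert-space structure under $\|\cdot\|_{L^2(\mu)}$) follows with no appeal to OMT or to a priori completeness in the $L^2(\mu)$ norm. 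Your argument buys nothing extra here but is a legitimate alternative; you could tighten it by noting that once the explicit direct bound is available, both the completeness and the non-degeneracy steps become immediate, so the more delicate a.e.\ bookkeeping (choosing $h\in E\cap g^{-1}E\cap V$, promoting $\mu$-a.e.\ to $m$-a.e., convolution continuity) is avoidable.
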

\begin{proof}
We observe that $Z^1(G,\pi)$ is a Banach space w.r.t.\ 
the norm $\|\,\cdot\,\|_Q$ (see \cite[Chapter~3]{bhv}), 
and that $\| b \|_{L^2(\mu)}\le(\int |x|_G^2\,d\mu(x))^{1/2} \| b \|_Q$. 
The other side inequality follows, via Open Mapping Theorem, from the fact that any 
measurable locally integrable $1$-cocycle into a separable Hilbert space 
is automatically continuous modulo a null set. 
However, following \cite{guichardet}, we give a more direct proof here. 
Take an open generating neighborhood $U$ of $e$ such that 
$U\subset Q$ and an open neighborhood $V$ of $e$ such that $V^2\subset U$. 
We observe that $(\int\|b(x)\|^2\,d\mu^{*2}(x))^{1/2}\le 2\|b\|_{L^2(\mu)}$
and $\ve:=\inf_{x\in UV}\frac{d\mu^{*2}}{dm}(x)>0$.
Thus, for every $g\in U$ one has 
\begin{align*}
\| b(g) \|^2 &= m(V)^{-1}\int_V \| b(gx) - \pi_gb(x) \|^2\,dm(x)\\
&\le 2m(V)^{-1}[\int_{gV}\|b(x)\|^2\,dm(x) + \int_{V}\|b(x)\|^2\,dm(x) ]\\
&\le 4\ve^{-1}m(V)^{-1}\int_{UV}\|b(x)\|^2\,d\mu^{*2}(x)\\
&\le 16\ve^{-1}m(V)^{-1}\|b\|_{L^2(\mu)}^2.
\end{align*}
Since there is $N\in\IN$ such that $Q\subset U^N$, 
this proves that the norms $\|\,\cdot\,\|_{L^2(\mu)}$ and $\|\,\cdot\,\|_Q$ are equivalent, 
and that $Z^1(G,\pi)$ is a Hilbert space w.r.t.\ the norm $\|\,\cdot\,\|_{L^2(\mu)}$.
\end{proof}

The \emph{reduced $1$-cohomology} group 
$\overline{H^1}(G,\pi):=Z^1(G,\pi)/\overline{B^1(G,\pi)}$ is 
defined to be the space $Z^1(G,\pi)$ of $1$-cocycles modulo the closure 
of the subspace $B^1(G,\pi)$ of $1$-coboundaries. 
We note that $\overline{B^1(G,\pi)}=B^1(G,\pi)$ if $\pi$ is finite-dimensional, 
by Theorem~1 in \cite{guichardet}.
See Chapter~3 in \cite{bhv} for an introduction to
first reduced cohomology groups. Thus, 
\[
Z^1(G,\pi)=\overline{B^1(G,\pi)}\oplus B^1(G,\pi)^\perp
\ \mbox{ and }\ 
\overline{H^1}(G,\pi) \cong B^1(G,\pi)^\perp.
\] 
We observe that 
$b\in Z^1(G,\pi)$ belongs to $B^1(G,\pi)^\perp$ if and only if 
it is \emph{$\mu$-harmonic} in the sense $\int b(x)\,d\mu(x)=0$ 
or equivalently $\int b(gx)\,d\mu(x) = b(g)$ for all $g\in G$.
Indeed, this follows from the identities $b(x^{-1})+\pi_x^{-1}b(x)=b(e)=0$ and 
\[
\int \ip{b(x),v-\pi_x v}\,d\mu(x) = 2\ip{\int b(x)\,d\mu(x), v}.
\]
We note that every summand of a $\mu$-harmonic $1$-cocycle 
is $\mu$-harmonic and that every non-zero $\mu$-harmonic $1$-cocycle 
is not a $1$-coboundary. 

We recall the general fact about orthogonal representations. 
Let $(\pi,\cH)$ be an orthogonal representation of $G$ and put 
\[
T_0:=\ie{\pi(X_1)}=\int\pi(g)\,d\mu(g).
\] 
Then, $T_0$ is a self-adjoint contraction on the Hilbert space $\cH$ 
such that $T_0^k=\ie{\pi(X_k)}$ for every $k$.
By strict convexity of a Hilbert space, a vector $v\in\cH$ 
satisfies $T_0v=v$ if and only if $\pi_g v=v$ for $\mu$-a.e.\ $g$, 
which is equivalent to that $v$ is $\pi(G)$-invariant. 
Thus by spectral theory, the operators $\frac{1}{n}\sum_{k=0}^{n-1}T_0^k = \ie{\frac{1}{n}\sum_{k=0}^{n-1}\pi(X_k)}$ 
converge in strong operator topology to the orthogonal projection $P_0$ 
onto the subspace of $\pi(G)$-invariant vectors. 
One moreover has convergence in probability 
\[
\| \frac{1}{n}\sum_{n=0}^{n-1}\pi(X_k)v - P_0 v \| \stackrel{\IP}{\to} 0.
\]
Indeed, to prove it, one may assume $P_0=0$ and in this case 
\[
\ie{\|\frac{1}{n}\sum_{n=0}^{n-1}\pi(X_k)v\|^2} = \frac{1}{n^2}\sum_{k,l=0}^{n-1} \ip{T_0^{|k-l|}v,v} \to 0.
\]

\begin{lemma}\label{lem:harmonicpart}
For every $b\in Z^1(G,\pi)=\overline{B^1(G,\pi)}\oplus B^1(G,\pi)^\perp$, 
one has
\[
\lim_n\frac{1}{n}\ie{\|b(X_n)\|^2}=\| b_{\mathrm{harm}} \|_{L^2(\mu)}^2,
\]
where $b_{\mathrm{harm}}$ is the $B^1(G,\pi)^\perp$ summand in the
above decomposition. 
In particular, $b$ is nonzero in $\overline{H^1}(G,\pi)$ if and only if
$\lim \frac{1}{n}\ie{\|b(X_n)\|^2}>0$.
\end{lemma}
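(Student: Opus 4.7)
The plan is to use the orthogonal decomposition $Z^1(G,\pi) = \overline{B^1(G,\pi)} \oplus B^1(G,\pi)^\perp$ and write $b = b_c + b_{\mathrm{harm}}$, then show
\[
\IE\bigl[\|b(X_n)\|^2\bigr] = n\|b_{\mathrm{harm}}\|^2_{L^2(\mu)} + \IE\bigl[\|b_c(X_n)\|^2\bigr],
\]
and further that $\IE[\|b_c(X_n)\|^2] = o(n)$. The first summand is the standard martingale computation already noted just before the lemma. The vanishing of the cross term $\IE[\ip{b_{\mathrm{harm}}(X_n),b_c(X_n)}]$ I handle by induction on $n$: expanding the increment via $b(X_n) = b(X_{n-1}) + \pi(X_{n-1})b(s_n)$ gives four terms, two of which vanish in expectation from $\int b_{\mathrm{harm}}\,d\mu = 0$ and one from the $L^2(\mu)$-orthogonality $\int \ip{b_{\mathrm{harm}}(s), b_c(s)}\,d\mu(s) = 0$; the remaining term $\IE[\ip{b_{\mathrm{harm}}(X_{n-1}),\pi(X_{n-1})\int b_c\,d\mu}]$ is handled by applying the cocycle identity $\pi(X_{n-1})^{-1}b_{\mathrm{harm}}(X_{n-1}) = -b_{\mathrm{harm}}(X_{n-1}^{-1})$, the symmetry of $\mu$ (which gives $X_{n-1}^{-1}\stackrel{d}{=} X_{n-1}$), and the martingale identity $\IE[b_{\mathrm{harm}}(X_{n-1})]=0$.

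The main obstacle is the estimate $\IE[\|b_c(X_n)\|^2] = o(n)$ when $b_c$ lies only in the closure $\overline{B^1(G,\pi)}$: for a genuine coboundary $b_v = v - \pi v$ one has the bounded estimate $\|b_v(X_n)\|\le 2\|v\|$, but this bound becomes useless after passage to the closure since $\|v_k\|$ need not stay bounded as $b_k \to b_c$. I will first establish the uniform bound
\[
\IE\bigl[\|c(X_n)\|^2\bigr] \leq n\|c\|^2_{L^2(\mu)} \quad\text{for every } c \in \overline{B^1(G,\pi)}.
\]
For a coboundary $c = v-\pi v$ this reduces, via $\IE[\|c(X_n)\|^2] = 2\ip{v,(I-T_0^n)v}$ and $\|c\|_{L^2(\mu)}^2 = 2\ip{v,(I-T_0)v}$, to the operator inequality $I-T_0^n \le n(I-T_0)$, itself a consequence of the commuting factorization $I - T_0^n = (I-T_0)\sum_{k=0}^{n-1}T_0^k$ together with $0 \leq \sum_{k=0}^{n-1}T_0^k \leq nI$. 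For a general $c \in \overline{B^1(G,\pi)}$, I approximate by coboundaries $b_k \to c$ in $L^2(\mu)$-norm; by Lemma~\ref{lemmaequivnorms} this is uniform convergence on $Q$, which by iterating the cocycle identity gives pointwise convergence on all of $G$, and Fatou's lemma then extends the bound from $b_k$ to $c$.

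With this uniform bound in hand, for any $\ve > 0$ I choose a coboundary $b_\ve = v_\ve - \pi v_\ve$ with $\|b_c - b_\ve\|_{L^2(\mu)} < \ve$ and split
\[
\IE\bigl[\|b_c(X_n)\|^2\bigr] \le 2\IE\bigl[\|b_\ve(X_n)\|^2\bigr] + 2\IE\bigl[\|(b_c - b_\ve)(X_n)\|^2\bigr] \le 8\|v_\ve\|^2 + 2n\ve^2,
\]
so that $\limsup_n \frac{1}{n}\IE[\|b_c(X_n)\|^2] \le 2\ve^2$; letting $\ve \to 0$ concludes the main step. Combining everything, $\lim_n \frac{1}{n}\IE[\|b(X_n)\|^2] = \|b_{\mathrm{harm}}\|^2_{L^2(\mu)}$, and the final assertion of the lemma follows since $b$ is zero in $\overline{H^1}(G,\pi)$ exactly when $b \in \overline{B^1(G,\pi)}$, equivalently when $b_{\mathrm{harm}} = 0$.
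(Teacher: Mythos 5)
Your proof is correct and follows essentially the same route as the paper's: decompose $b = b_c + b_{\mathrm{harm}}$ along $\overline{B^1(G,\pi)}\oplus B^1(G,\pi)^\perp$, establish the bound $\IE[\|c(X_n)\|^2] \le n\|c\|_{L^2(\mu)}^2$ for $c$ in the closure of coboundaries via the operator inequality $1-T_0^n \le n(1-T_0)$, and deduce sublinearity by approximating $b_c$ with genuine coboundaries (for which the walk value is bounded). The one place where the two arguments diverge materially is the cross term $\IE\ip{b_{\mathrm{harm}}(X_n), b_c(X_n)}$: you run a hands-on induction over $n$, unfolding the increment and killing one term via the cocycle identity $\pi_x^{-1}b_{\mathrm{harm}}(x) = -b_{\mathrm{harm}}(x^{-1})$ together with symmetry of $\mu$ and $\IE[b_{\mathrm{harm}}(X_{n-1})]=0$, whereas the paper gets the same conclusion in one line by observing that $b_{\mathrm{harm}}$ is $\mu^{*n}$-harmonic and hence orthogonal to $\overline{B^1(G,\pi)}$ already in $L^2(\mu^{*n})$, which \emph{is} the expectation in question. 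Your route is more elementary but longer; the paper's exploits the observation preceding the lemma more directly. One small bookkeeping slip in your write-up: of the four terms in the expansion, only one vanishes from $\int b_{\mathrm{harm}}\,d\mu=0$ (the mixed term $\IE\ip{\pi(X_{n-1})b_{\mathrm{harm}}(s_n), b_c(X_{n-1})}$); the fourth term $\IE\ip{b_{\mathrm{harm}}(X_{n-1}), b_c(X_{n-1})}$ is the one handled by the induction hypothesis, not by that identity. The rest — the use of Lemma~\ref{lemmaequivnorms} to pass from $L^2(\mu)$-convergence to pointwise convergence and then Fatou, in place of the paper's appeal to norm-continuity of $c\mapsto\IE[\|c(X_n)\|^2]$ — is a harmless presentational variant.
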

\begin{proof}
Let $T_0:=\int \pi(g)\,d\mu(g)$.
If $c\in B^1(\pi,\cH)$ is a $1$-coboundary, $c(x) = v - \pi_x v$,
then for every $n$ one has
\[
\frac{1}{n} \ie{ \| c(X_n) \|^2} = \frac{2}{n}\ip{(1-T_0^n)v,v}
 \le 2\ip{(1-T_0)v,v} = \| c \|_{L^2(\mu)}^2.
\]
Since $c\mapsto \ie{ \| c(X_n) \|^2}$ is norm-continuous by Lemma \ref{lemmaequivnorms},
the above inequality holds for all $c\in \overline{B^1(G,\pi)}$.
Hence, for any $c\in \overline{B^1(G,\pi)}$, by approximating it
by $c_m\in B^1(G,\pi)$, one has
\[
\limsup_n \frac{1}{n} \ie{ \| c(X_n) \|^2}
=\limsup_n \frac{1}{n} \ie{ \| (c-c_m)(X_n) \|^2}
\le \| c-c_m \|_{L^2(\mu)}^2 \to 0.
\]
Now let $b=c+b_{\mathrm{harm}}\in\overline{B^1(G,\pi)} + B^1(G,\pi)^\perp$
be given.
Note that since $b_{\mathrm{harm}}$ is $\mu^{*n}$-harmonic, it is orthogonal to
$c$ in $L^2(\mu^{*n})$. Consequently, one has
\smallskip

\hspace*{\fill}$\displaystyle
\lim_n\frac{1}{n}\ie{\|b(X_n)\|^2}
 = \lim_n \frac{1}{n}\ie{\|c(X_n)\|^2+\|b_{\mathrm{harm}}(X_n)\|^2}
 =  \| b_{\mathrm{harm}} \|_{L^2(\mu)}^2.$\hspace*{\fill}
\end{proof}

It is not clear whether $\frac{1}{n^2}\ie{\|b(X_n)\|^4}$ is bounded 
for every $1$-cocycle $b$. However, it is the case for any $\mu$-harmonic $1$-cocycle $b$ 
(cf.\ Footnote 2 in \cite{leeperes}).
\begin{lemma}\label{lemma:aprioribound}
For every $d$, one has 
\[
\sup_n\sup_{b} \frac{1}{n^d}\ie{\|b(X_n)\|^{2d}} < \infty,
\]
where the supremum runs over all normalized $\mu$-harmonic $1$-cocycles $b$.
\end{lemma}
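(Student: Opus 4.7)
The plan is to exploit the martingale structure of $M_n := b(X_n)$. Harmonicity of $b$ combined with the cocycle identity gives $M_n = \sum_{k=1}^n \pi_{X_{k-1}}b(s_k)$, a $\cH$-valued martingale with respect to the natural filtration; its increments $d_k := \pi_{X_{k-1}}b(s_k)$ satisfy $\|d_k\| = \|b(s_k)\|$, and since the $s_k$ are i.i.d.\ distributed as $\mu$, the norms $\|d_k\|$ form an i.i.d.\ sequence with the same distribution as $\|b(s)\|$. Applying the Burkholder--Davis--Gundy inequality, which remains valid for martingales with values in a Hilbert space, yields a universal constant $C_d$ such that
\[
\ie{\|M_n\|^{2d}} \le C_d\, \ie{\Bigl(\sum_{k=1}^n \|d_k\|^2\Bigr)^d},
\]
after which Minkowski's inequality in $L^d$ applied to the i.i.d.\ nonnegative summands $\|d_k\|^2$ bounds the right-hand side by $C_d\, n^d\, \ie{\|b(s_1)\|^{2d}}$.

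Next I would control $\ie{\|b(s_1)\|^{2d}}$ uniformly over normalized harmonic $b$. By Lemma~\ref{lemmaequivnorms}, there is a universal constant $K$ with $\|b\|_Q \le K\|b\|_{L^2(\mu)} = K$ for every normalized $b$, and combining this with the Lipschitz bound $\|b(g)\| \le \|b\|_Q\,|g|_G$ gives
\[
\ie{\|b(s_1)\|^{2d}} \le K^{2d}\, \ie{|s_1|_G^{2d}},
\]
which is finite by the moment assumption on $\mu$ in the Convention and independent of $b$. Assembling the three estimates yields the asserted bound, uniform in $n$ and in the normalized harmonic cocycle $b$.

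The main obstacle is invoking Burkholder--Davis--Gundy in the Hilbert-valued setting. While classical, one may alternatively argue by induction on $d$, expanding
\[
\|M_n\|^{2d} = \bigl(\|M_{n-1}\|^2 + 2\langle M_{n-1}, d_n\rangle + \|d_n\|^2\bigr)^d,
\]
and taking conditional expectation given the $\sigma$-algebra generated by $s_1,\dots,s_{n-1}$. Given that past, $d_n$ is distributed as $\pi_{X_{n-1}}b(s)$ with $s\sim\mu$, so harmonicity kills the first-order cross term while the higher conditional moments of $\langle M_{n-1},d_n\rangle$ are controlled by $\|M_{n-1}\|^{j}$ times moments of $\|b(s)\|^{j}$ already bounded in the previous step. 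The inductive hypothesis applied to $\ie{\|M_{n-1}\|^{2j}}$ for $j<d$ then closes the induction, giving a self-contained proof that avoids any black-box probabilistic input beyond the martingale structure and the moment condition on $\mu$.
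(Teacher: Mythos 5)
Your argument is correct, and your primary route is genuinely different from the paper's. The paper proceeds by first proving $\limsup_n n^{-d}\ie{\|b(X_n)\|^{2d}}\le (2d-1)!!$ for each fixed normalized harmonic $b$ by an elementary induction on $d$ (expanding $\iint\|b(x)-b(y)\|^{2d}\,d\mu^{*n-1}(x)\,d\mu(y)$ and using harmonicity to kill the linear cross term), and then invoking the Principle of Uniform Boundedness to pass from the pointwise bound to the uniform one. You instead write $b(X_n)$ as a Hilbert-valued martingale with increments $d_k=\pi_{X_{k-1}}b(s_k)$, observe $\|d_k\|=\|b(s_k)\|$ are i.i.d., apply Burkholder--Davis--Gundy for Hilbert-valued martingales, and bound $\ie{(\sum\|d_k\|^2)^d}\le n^d\ie{\|b(s_1)\|^{2d}}$ via Minkowski in $L^d$. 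This yields the uniform-in-$b$ bound \emph{directly}: the only $b$-dependence, $\ie{\|b(s_1)\|^{2d}}$, is controlled by $\|b\|_Q^{2d}\ie{|s_1|_G^{2d}}$, and Lemma~\ref{lemmaequivnorms} gives $\|b\|_Q\le K\|b\|_{L^2(\mu)}=K$ with $K$ depending only on $(G,Q,\mu)$, not on $\pi$ or $b$. So you bypass PUB entirely. The trade-off is that BDG in the Hilbert-valued setting is an external black box, whereas the paper's induction is self-contained (and delivers an explicit constant $(2d-1)!!$). Your sketched fallback induction, expanding $\|M_n\|^{2d}=(\|M_{n-1}\|^2+2\ip{M_{n-1},d_n}+\|d_n\|^2)^d$ and using conditional expectations, is essentially the paper's argument up to bookkeeping, so that part is a reproof rather than an alternative. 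One small remark: when you cite Lemma~\ref{lemmaequivnorms}, it is worth noting explicitly that the equivalence constant there is independent of the orthogonal representation $\pi$ (the proof in the paper produces a constant depending only on $Q$, the neighborhoods $U,V$, and $\mu$), since that independence is exactly what makes the uniformity over $b$ work; the paper uses the same observation implicitly.
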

\begin{proof}
We fix a universal orthogonal representation $(\pi,\cH)$ and 
consider the operators $U_n$ from the space of $\mu$-harmonic 
cocycles into $L^{2d}(\mu^{*n};\cH)$, given by 
$U_nb = n^{-1/2}b$. 
Since
\begin{align*}
\| U_n b \|  = (\frac{1}{n^d}\ie{ \|b(X_n)\|^{2d} } )^{1/2d} 
\le n^{1/2}\ie{ |X_1|_G^{2d} }^{1/2d}\|b\|_Q
\end{align*}
(by the H\"older inequality $(\sum_{i=1}^n a_i)^{2d} \le n^{2d-1}\sum_{i=1}^n a_i^{2d}$ 
for $a_i\geq0$),
the operators $U_n$ are bounded by Lemma~\ref{lemmaequivnorms}.
The lemma claims that $U_n$'s are uniformly bounded. 
For this, by Principle of Uniform Boundedness, 
it suffices to show $\sup_n \|U_nb\|<\infty$ for each $b$.
(The use of PUB can be avoided if one does the following proof more meticulously.) 
We in fact prove that $\limsup_n \frac{1}{n^d}\ie{\|b(X_n)\|^{2d}} \le (2d-1)!!$ 
for each normalized harmonic cocycle $b$, by induction on $d$. 
Here $(2d-1)!!=\prod_{k=1}^d (2k-1)$. 
The case $d=1$ is clear. 
By induction hypothesis and the Cauchy--Schwarz inequality when $k$ is odd, 
we may assume that there is $C>0$ such that 
$\ie{\|b(X_n)\|^{k}} \le Cn^{k/2}$ for all $k\le 2(d-1)$. 
It follows that 
\begin{align*}
\ie{\|b(X_n)\|^{2d}} &=\iint \| b(x) - b(y) \|^{2d}\,d\mu^{*n-1}(x)d\mu(y) \\
 &\hspace*{-50pt}= \iint (\|b(x)\|^2-2\ip{b(x),b(y)}+\|b(y)\|^2)^d\,d\mu^{*n-1}(x)d\mu(y) \\
 &\hspace*{-50pt}= \iint \| b(x) \|^{2d} + \binom{d}{1}\| b(x) \|^{2(d-1)}\|b(y)\|^2 \\ 
   &\hspace*{-20pt} + 4\binom{d}{2}\| b(x)
\|^{2(d-2)}|\ip{b(x),b(y)}|^2\,d\mu^{*n-1}(x)d\mu(y) + C'n^{(2d-3)/2} \\
 &\hspace*{-50pt}\le \ie{\|b(X_{n-1})\|^{2d}} + (d+2d(d-1))\cdot (2d-3)!!\cdot n^{d-1}+ C'n^{d-3/2} \\
 &\hspace*{-50pt}\le\cdots\le \sum_{k=1}^n ((2d-1)!!\cdot dn^{d-1}+ C'k^{d-3/2}) \\
 &\hspace*{-50pt}=(2d-1)!!\cdot n^d + o(n^d),
\end{align*}
where $C'$ is some constant depending on $d$ but not on $n$. 
This finishes the proof.
\end{proof}

We start the proof of Theorem~\ref{thm:beta}. 
Recall that the tensor product Hilbert space $\cH\otimes\cH$ is canonically 
identified with the space of Hilbert--Schmidt operators $\cS_2(\cH)$ on $\cH$ 
via $v'\otimes v \leftrightarrow S_{v'\otimes v}$, 
where $S_{v'\otimes v}(u)=\ip{u,v}v'$. 
Under this identification, the operators 
$\pi_g\otimes\pi_g$ on $\cH\otimes\cH$ act on $\cS_2(\cH)$ by 
conjugation $\Ad\pi_g\colon S\mapsto \pi_gS\pi_g^*$. 
Every Hilbert--Schmidt operator is compact and 
every compact self-adjoint operator $S$ has a unique spectral 
decomposition $S=\sum_i \lambda_i E_i$ where $\lambda_i\in\IR$ 
are the non-zero eigenvalues of $S$ and $E_i$ are the finite-rank orthogonal projections 
onto the corresponding eigenspaces. 
If $v\in\cH\otimes\cH$ is $(\pi\otimes\pi)(G)$-invariant, then 
$S_v$ is $\Ad\pi(G)$-invariant and so are the spectral projections 
$E_i$'s, which means that $E_i\cH$ are finite-dimensional $\pi(G)$-invariant subspaces. 

Now let us consider a $1$-cocycle $b\colon G\to\cH$ and put 
\[
w:=\int (b\otimes b)(x)\,d\mu(x) \in \cH\otimes\cH 
\ \mbox{ and }\ 
T:=\int \pi_g\otimes \pi_g \,d\mu(g).
\]
Then, $T$ is a self-adjoint contraction on $\cH\otimes\cH$, 
which is positivity preserving as an operator on $\cS_2(\cH)$.
By the previous discussion, $\frac{1}{n}\sum_{k=0}^{n-1}T^k$ converges 
in strong operator topology to the orthogonal projection $P$ from 
$\cH\otimes\cH$ onto the subspace of $(\pi\otimes\pi)(G)$-invariant vectors.
In particular, $\frac{1}{n}\sum_{k=0}^{n-1}T^kw$ converges to $Pw$ in norm and 
$S_{Pw}$ is a positive Hilbert--Schmidt operator which is $\Ad\pi(G)$-invariant. 
For any $\pi(G)$-invariant closed subspace $\cK\subset\cH$, one has 
\[
P_{\cK} S_{Pw} P_{\cK} = S_{(P_{\cK}\otimes P_{\cK})Pw} = S_{P(P_{\cK}\otimes P_{\cK})w}=S_{Pw_{\cK}}, 
\] 
where $w_{\cK}=\int (b_{\cK}\otimes b_{\cK})(x)\,d\mu(x)$ for the cocycle $b_{\cK}=P_{\cK}b$. 
If $b$ is finite-dimensional, then the trace $\Tr$ is norm-continuous and 
\[
\Tr(S_{Pw})=\Tr(\lim_n \frac{1}{n}\sum_{k=0}^{n-1}  S_{T^kw}) = \Tr(S_{w}) = \|b\|_{L^2(\mu)}^2.
\]
In general, one has 
the spectral decomposition 
\[
\tag{$\ast$} S_{Pw}=\sum_i\lambda_i E_i
\] where 
$\lambda_1,\lambda_2,\ldots$ is a finite or infinite sequence of 
strictly positive numbers 
and $E_i\cH$'s are finite-dimensional $\pi(G)$-invariant subspaces. 
Thus for $b_i:=E_ib$ and $b_\infty:=b - (\sum_i b_i)$, 
one has the direct sum decomposition $b=b_\infty+\sum_i b_i$. 
We claim that each $1$-cocycle $b_i$, $i\neq\infty$, is nonzero and that 
$b_\infty$ is \emph{weakly mixing} in the sense that it does not admit 
a nonzero finite-dimensional summand anymore. 
First, put $E_\infty:=1-\sum_iE_i$ and observe that 
for $w_i:=(E_i\otimes E_i)w=\int (b_i\otimes b_i)(x)\,d\mu(x)$, 
one has $S_{Pw_i}=E_iS_{Pw}E_i=\lambda_iE_i$, 
including the case $i=\infty$ and $\lambda_\infty:=0$. 
It follows that 
$b=b_\infty\oplus\sum_i^\oplus b_i$ and 
$S_{Pw}=S_{Pw_\infty}\oplus\sum_i^\oplus S_{Pw_i}$ 
in accordance with $\cH=E_\infty\cH\oplus\bigoplus_i E_i\cH$.
That $S_{Pw_\infty}=0$ means that $b_\infty$ is weakly mixing. 
Thus $\|Pw\|\neq0$ if and only if $b$ has a nonzero finite-dimensional summand.
Moreover, one has 
\[
\Tr(S_{Pw}) = \sum_i \Tr(S_{Pw_i}) = \sum_i \lambda_i\Tr(E_i) = \sum_i \|b_i\|_{L^2(\mu)}^2
\]
and 
\[
\|Pw\|^2=\Tr(S_{Pw}^2)=\sum_i\lambda_i^2 \Tr(E_i).
\]

For the proof of Theorem~\ref{thm:beta}, 
in view of Lemma \ref{lem:harmonicpart} and the fact that any 
nonzero $\mu$-harmonic $1$-cocycle is cohomologically non-trivial, 
we may assume that the $1$-cocycle $b$ is $\mu$-harmonic.
For such $b$, we have the following more precise form of Theorem~\ref{thm:beta}.

For any $\theta\ge 0$ and any finite or infinite (possibly null) 
sequence $\sigma_k$ of positive numbers, 
we denote by $\chi(\theta, \sigma_k)$ the distribution 
of $\sqrt{ \theta^2 + \sum_k \sigma_k^2 g_k^2 }$,
where $g_k$ are independent standard centered Gaussian random variables.


\begin{theorem}\label{theorem:martingaleclt}
Let $G$ be as in Convention. Let $b$ be a normalized $\mu$-harmonic $1$-cocycle.
Let $w$, $Pw$, and $S_{Pw}=\sum_i\lambda_i E_i$  be as defined  in $(*)$  before the formulation of the theorem.  
Then, 
\[
\lim_{n\to\infty} \frac{1}{2}\ie{\bigl| \frac{\|b(X_n)\|^2}{n} - 1 \bigr|^2} = \|Pw\|^2 \le (\min_i \dim E_i\cH)^{-1}.
\]

Moreover, 
the random variables $\frac{1}{\sqrt{n}}\| b(X_n) \|$ converge in distribution and in moments to
$\chi(\theta, \sigma_k)$, 
where $\theta=\| E_\infty b\|_{L^2(\mu)}$, and $\sigma_k^2$ are 
positive eigenvalues of $S_{Pw}$ counted with multiplicities 
(i.e., $\sigma_k=\lambda_i^{1/2}$ for $\sum_{l=1}^{i-1}\dim E_l\cH < k \le \sum_{l=1}^i\dim E_l\cH$), which satisfy 
$\theta^2+\sum_k\sigma_k^2=\| b \|_{L^2(\mu)}^2=1$.   
One has $\theta>0$ if and only if $b$ admits a weakly mixing summand; and 
$\sigma_k>0$ for some $k$ if and only if $b$ admits a non-zero finite-dimensional summand.
\end{theorem}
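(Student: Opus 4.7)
The plan is to establish convergence in distribution and in moments of $\|b(X_n)\|/\sqrt n$ to $\chi(\theta,\sigma_k)$, and then extract the displayed limit and the upper bound by computation. Using the decomposition $b = b_\infty \oplus \bigoplus_i b_i$ constructed just before the statement, orthogonality gives $\|b(X_n)\|^2 = \|b_\infty(X_n)\|^2 + \sum_i \|b_i(X_n)\|^2$, so each summand can be analyzed separately.

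For a finite-dimensional summand $b_i$, the martingale $b_i(X_n) = \sum_{k=1}^n \pi(X_{k-1}) b_i(s_k)$ lives in the finite-dimensional subspace $E_i\cH$. Its conditional covariance at step $k$ is $(\pi\otimes\pi)(X_{k-1}) w_i$; since $Pw_i = \lambda_i E_i$ lies in the invariant subspace of $\pi\otimes\pi$, the mean ergodic theorem (as invoked before Lemma~\ref{lem:harmonicpart}) gives $\frac{1}{n}\sum_{k=1}^n (\pi\otimes\pi)(X_{k-1}) w_i \to \lambda_i E_i$ in probability. The increments have all moments finite by the moment assumption on $\mu$ and Lemma~\ref{lemmaequivnorms}, so the finite-dimensional martingale CLT yields $b_i(X_n)/\sqrt n \to \mathcal{N}(0,\lambda_i E_i)$ in distribution. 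Hence $\|b_i(X_n)\|^2/n$ converges in distribution to $\lambda_i\sum_{j=1}^{\dim E_i\cH} g_{i,j}^2$ with standard Gaussians $g_{i,j}$ that are jointly independent across $i$ from the block-diagonal structure of the limit covariance.

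The delicate case is the weakly mixing summand $b_\infty$, for which I claim $\|b_\infty(X_n)\|^2/n \to \theta^2$ in $L^2$. Since $b_\infty$ is harmonic, $M_n := \|b_\infty(X_n)\|^2 - n\theta^2$ is a martingale with differences
\[
D_k = 2\ip{b_\infty(X_{k-1}),\pi(X_{k-1})b_\infty(s_k)} + (\|b_\infty(s_k)\|^2 - \theta^2).
\]
The cross and constant pieces of $\ie{D_k^2}$ contribute at most $O(\sqrt k)$, so only the quadratic piece
\[
4\ie{\ip{b_\infty(X_{k-1}),\pi(X_{k-1})b_\infty(s_k)}^2} = 4\ie{\ip{B_{k-1},(\pi\otimes\pi)(X_{k-1}) w_\infty}}
\]
matters, where $B_j := b_\infty(X_j)\otimes b_\infty(X_j)$ (the equality uses $\ip{u,v}^2 = \ip{u\otimes u,v\otimes v}$ and integrates $s_k$). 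Now unitarity of $\pi\otimes\pi$, the cocycle identity $\pi(g^{-1})b_\infty(g)=-b_\infty(g^{-1})$, and symmetry of $\mu$ (so $X_{k-1}^{-1}$ has the law of $X_{k-1}$) together rewrite this as $4\ip{\sum_{j=0}^{k-2} T^j w_\infty,w_\infty}$. Weak mixing is precisely $Pw_\infty = 0$, so $\sum_{j=0}^{n-1} T^j w_\infty = o(n)$ in norm, and summation by parts gives $\ie{M_n^2} = o(n^2)$.

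Combining the two parts — after truncating the sum over $i$ to finitely many, with tail controlled in $L^2$ by $\sum_i \lambda_i \dim E_i\cH = \Tr(S_{Pw}) \le 1$ — yields convergence in distribution of $\|b(X_n)\|^2/n$ to $\theta^2 + \sum_k \sigma_k^2 g_k^2$, i.e., of $\|b(X_n)\|/\sqrt n$ to $\chi(\theta,\sigma_k)$. Lemma~\ref{lemma:aprioribound} uniformly bounds all moments, upgrading weak convergence to convergence of every moment. The displayed limit then reads $\frac{1}{2}\mathrm{Var}(\theta^2+\sum_k\sigma_k^2 g_k^2)=\sum_k\sigma_k^4=\|Pw\|^2$, the last equality being the Hilbert--Schmidt norm squared of $S_{Pw}=\sum_i\lambda_i E_i$. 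The upper bound follows since $\Tr(S_{Pw})\le 1$ forces $\lambda_i \le (\dim E_i\cH)^{-1}$ for every $i$, hence $\|Pw\|^2 \le (\max_i\lambda_i)\cdot\Tr(S_{Pw}) \le (\min_i \dim E_i\cH)^{-1}$. The two final equivalences are immediate from the definitions of $\theta$ and $\sigma_k$. The main obstacle is the weakly mixing variance estimate: the naive $L^2$ bound only gives $O(n^2)$, and extracting the sharper $o(n^2)$ requires the symmetrization step above that converts the second moment into a quantity vanishing by weak mixing.
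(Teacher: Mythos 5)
Your argument is correct and is essentially the paper's proof: you split $b=b_\infty\oplus\bigoplus_i b_i$, run a martingale CLT on the finite-dimensional part, prove $L^2$-concentration of $\|b_\infty(X_n)\|^2/n$ to $\theta^2$ via weak mixing (i.e.\ $Pw_\infty=0$), truncate the infinite sum using Lemma~\ref{lemma:aprioribound}, and upgrade to moment convergence. The one cosmetic difference is that you obtain the concentration for $b_\infty$ by expanding $\ie{M_n^2}=\sum_k\ie{D_k^2}$ into $4\sum_{k}\ip{\sum_{j=0}^{k-2}T^jw_\infty,w_\infty}+O(n^{3/2})$, whereas the paper first computes $\ie{\|b(X_n)\|^4}=4\ip{\sum_{k=1}^{n-1}(n-k)T^{k-1}w,w}+O(n)$ for general harmonic $b$ and specializes to $b_\infty$ afterward --- these are the same sum after re-indexing. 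Correspondingly, you recover the displayed $L^2$ limit at the end from the moment convergence to $\chi(\theta,\sigma_k)$, while the paper proves it first and reuses it for $b_\infty$. One small point of care you gloss over: asymptotic independence of the blocks requires applying the (multivariate) martingale CLT jointly to $\bigoplus_{i\le k}b_i$ (or, as the paper does, to arbitrary linear functionals $\ip{b(X_n),v}$) so that the limiting Gaussian has the block-diagonal covariance $\bigoplus\lambda_iE_i$; stating this explicitly, together with the Lindeberg check $\sum_k\ie{Y_{n,k}^2\mathbf 1_{\{|Y_{n,k}|\geq\ve\}}}\to0$ that the paper spells out, would close the argument.
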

\begin{proof}[Proof of Theorem~\ref{thm:beta} and Theorem~\ref{theorem:martingaleclt}]
Let $b$ be a normalized $\mu$-harmonic $1$-cocycle. 
In the discussion above, we already saw
$\|Pw\|\neq0$ if and only if $b$ has a nonzero finite-dimensional summand. 
Moreover the above formula implies 
\[
\|Pw\|^2 = \sum_i\lambda_i^2 \Tr(E_i) \le (\max_i \lambda_i)\Tr(S_{Pw}) \le (\min_i \Tr(E_i))^{-1},
\]
since $\Tr(S_{Pw})=\sum_i\lambda_i\Tr(E_i)\le1$. Note that $\Tr(E_i)=\dim E_i\cH$.

Next, we prove that $\ie{ |\frac{\|b(X_n)\|^2}{n} - 1 |^2} \to 2\ip{Pw,w}=2\|Pw\|^2$.
Recall that 
\begin{align*}
\int (b\otimes b)(x)\,d\mu^{*n}(x)
&= \iint (b\otimes b)(xy)\,d\mu^{*n-1}(x)\,d\mu(y)\\
&= \iint (b\otimes b)(x)+(\pi_x\otimes\pi_x)(b\otimes b)(y)\,d\mu^{*n-1}(x)\,d\mu(y)\\
&= \int (b\otimes b)(x)\,d\mu^{*n-1}(x) + T^{n-1}w\\ 
&= (1+T+\cdots+T^{n-1})w,
\end{align*}
and $\int \| b(x) \|^2 \,d\mu^{*n}(x)=n$ 
(see \cite{leeperes} and \cite{ozawa}). Hence 
\begin{align*}
\ie{\|b(X_n)\|^4} &= \int \|b(x)\|^4 \,d\mu^{*n}(x) \\
 &\hspace*{-50pt}= \iint (\|b(x)-b(y)\|^2)^2 \,d\mu^{*n-1}(x)\,d\mu(y)\\ 
 &\hspace*{-50pt}= \iint \bigl[\|b(x)\|^4 + 4|\ip{b(x),b(y)}|^2 + \|b(y)\|^4 + 2\|b(x)\|^2\|b(y)\|^2\bigr] \,d\mu^{*n-1}(x)\,d\mu(y)\\ 
 &\hspace*{-50pt}= \ie{\|b(X_{n-1})\|^4} + 4\ip{\sum_{k=0}^{n-2}T^k w,w} + \ie{\|b(X_1)\|^4} + 2(n-1) \\
 &\hspace*{-50pt}= 4\ip{ \sum_{k=1}^{n-1} (n-k)T^{k-1} w,w} + n\ie{\|b(X_1)\|^4} + n(n-1) \\
 &\hspace*{-50pt}\le 3n^2 + O(n).
\end{align*}
By Bounded Convergence Theorem, this implies that
\begin{align*}
\ie{ \bigl| \frac{\|b(X_n)\|^2}{n} - 1 \bigr|^2} 
 &= \ie{ \frac{1}{n^2}\|b(X_n)\|^4 - \frac{2}{n}\|b(X_n)\|^2 +1}\\
 &= \frac{4}{n^2}\ip{  \sum_{k=1}^{n-1} (n-k)T^{k-1}w,w} + \frac{1}{n}(\ie{\|b(X_1)\|^4}-1)\\
 &\to 2\ip{Pw,w}.
\end{align*}

Now since $\sup_n \ie{\bigl| \frac{1}{n}||b(X_n)||^2 -1 \bigr|^3}<\infty$ 
by Lemma~\ref{lemma:aprioribound}, the sequence $\frac{1}{n}||b(X_n)||^2$ 
tends to a constant (which is necessarily $1$) in probability if and only if 
one has $\ie{\bigl| \frac{1}{n}||b(X_n)||^2 -1 \bigr|^2}\to0$.
This completes the proof of Theorem~\ref{thm:beta} 
and the first part of Theorem~\ref{theorem:martingaleclt}. 

For the second half of Theorem~\ref{theorem:martingaleclt}, we first note that 
convergence in distribution and convergence in moments are equivalent 
in our setting. 
Indeed, by the moments condition 
$\sup_n\frac{1}{n^d}\ie{\|b(X_n)\|^{2d}} < \infty$ 
(Lemma~\ref{lemma:aprioribound}), 
convergence in distribution implies that in moments 
(see \cite[Corollary 25.12]{billingsley}). 
And conversely, since the normal distribution and 
the distributions $\chi(\theta, \sigma_k)$ are uniquely determined 
by their moments (see \cite[Theorem 30.1]{billingsley}),  
convergence in moments to such a distribution implies that in distribution 
(see \cite[Theorem 30.2]{billingsley}). 


We use Martingale Central Limit Theorem (Theorem 35.12 in \cite{billingsley}) 
to prove that for any $v\in\cH$ the random variables $S_n := n^{-1/2}\ip{b(X_n),v}$ 
converge to a normal distribution $N(0,q(v))$ where $q(v)=\ip{S_{Pw}v,v}$. 
Consider the martingale array $S_{n,k}:=n^{-1/2}\ip{ b(X_k), v}$, $k=1,\ldots,n$, 
and put 
\[
Y_{n,k}:=S_{n,k}-S_{n,k-1}=n^{-1/2}\ip{\pi(X_{k-1})b(X_{k-1}^{-1}X_k),v}.
\] 
Since $X_{k-1}^{-1}X_k$ has the same distribution as $X_1$, 
one has 
\begin{align*}
\sum_{k=1}^n \ie{Y_{n,k}^2 \parallel X_1,\ldots,X_{k-1}}
 &= \frac{1}{n}\sum_{k=1}^n \ip{(\pi\otimes\pi)(X_{k-1})w,v\otimes v}\\
 &\stackrel{\IP}{\to} \ip{Pw,v\otimes v} = q(v),
\end{align*}
and, for every $\ve$,
\begin{align*}
\sum_{k=1}^n \ie{ Y_{n,k}^2 1_{\{ |Y_{n,k}| \geq\ve\}} }
 \le \ie{ \|b(X_1)\|^2\|v\|^2 1_{\{ \| b(X_1) \|\geq \ve n^{1/2} \}} }\to0.
\end{align*}
This shows that the array $S_{n,k}$ satisfies the assumption of 
the Martingale CLT    \cite[Thm 35.12]{billingsley}, and we can conclude that $S_{n,n}\Rightarrow N(0,q(v))$ in distribution. 

Now recall that $S_{Pw}=\sum_i\lambda_i E_i$ and $b=b_\infty+\sum b_i$, 
and take an orthonormal basis $\{ v_{i,j} : j=1,\ldots,\Tr(E_i)\}$ of $E_i\cH$. 
Then, by the previous paragraph, 
$n^{-1/2}\ip{b(X_n),v_{i,j}}$ converges in distribution to 
a centered Gaussian random variable $g_{i,j}$ with variance $q(v_{i,j})=\lambda_i$. 
Moreover, for any $\beta_{i,j}\in\IR$, the random variables 
$\sum_{i,j}\beta_{i,j}n^{-1/2}\ip{b(X_n),v_{i,j}}$
converge in moments to $N(0,q(\sum_{i,j}\beta_{i,j}v_{i,j}))$,
where 
\[
q(\sum_{i,j}\beta_{i,j}v_{i,j})
=\sum_{i,j} \beta_{i,j}^2\lambda_i = \sum_{i,j}\beta_{i,j}^2q(v_{i,j}).
\]
This means that the family $\{ \ip{n^{-1/2}b(X_n),v_{i,j}} \}_{i,j}$  
are asymptotically independent as $n\to\infty$.
Thus, for any $k\in\IN$, one has
\[
 \frac{1}{n}||\sum_{i=1}^k b_i(X_n)||^2 
= \sum_{i=1}^k \sum_j |n^{-1/2}\ip{b(X_n),v_{i,j}}|^2
\Rightarrow\sum_{i=1}^k\sum_j \lambda_i g_{i,j}^2,
\]
where $g_{i,j}$ are independent standard centered Gaussian random variables. 
Since 
\[
\lim_k \sup_n \ie{(\frac{1}{n}|| \sum_{i>k} b_i(X_n)||^2)^d}
 \le \lim_k C_d \| \sum_{i>k} b_i \|_{L^2(\mu)}^{2d}
 =0
\]
where $C_d$ is a constant independent of $k$ 
(by Lemma~\ref{lemma:aprioribound}), one has 
\[
\lim_n \ie{(\frac{1}{n}\| \sum_{i} b_i(X_n)\|^2)^d}
 = \lim_k\lim_n \ie{(\frac{1}{n}\| \sum_{i=1}^k b_i(X_n)\|^2)^d} 
\] 
for every $d$. 
Also, since $\frac{1}{n}\|b_\infty(X_n)\|^2\to \|b_\infty\|_{L^2(\mu)}^2$ in moments 
by the first half of the proof, 
one has 
$\frac{1}{n}\|b(X_n)\|^2 \to \|b_\infty\|_{L^2(\mu)}^2+\sum_{i,j} \lambda_ig_{i,j}^2 
\sim\chi(\theta,\sigma_k)^2$ in moments. 
\end{proof}

Recall that a group $G$ is said to have \emph{Shalom's property 
$H_\mathrm{FD}$} (\cite{shalom}) if every orthogonal representation $\pi$ with 
$\overline{H^1}(G,\pi)\neq0$ contains a non-zero finite-dimensional 
subrepresentation. In other words, $G$ has property $H_\mathrm{FD}$ 
if and only if every $\mu$-harmonic $1$-cocycle $b$ decomposes into 
a (possibly infinite) direct sum of finite-dimensional summands.
By Theorem~\ref{theorem:martingaleclt}, the latter happens for $b$ 
if and only if 
$\lim_n\mu^{*n}(\{ x \in G : \| b(x) \|\le c\sqrt{n} \})>0$ for all $c>0$. 

\begin{corollary}\label{cor:beta}
Assume either $(1)$ $\liminf_n\| \mu^{*n} - \mu^{*(1+\delta)n} \|_1<2$ for some $\delta>0$ or $(2)$ $\limsup_n\mu^{*n}(B_G(c\sqrt{n}))>0$ for all $c>0$. 
Then, $G$ has Shalom's property $H_{\mathrm{FD}}$. 
\end{corollary}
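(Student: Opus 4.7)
The plan is to exploit the criterion stated just before Corollary~\ref{cor:beta}: $G$ has property $H_{\mathrm{FD}}$ if and only if every normalized $\mu$-harmonic $1$-cocycle $b$ satisfies $\lim_n\mu^{*n}(\{x:\|b(x)\|\le c\sqrt{n}\})>0$ for every $c>0$. By Theorem~\ref{theorem:martingaleclt} this limit equals $\IP(\chi(\theta,\sigma_k)\le c)$, which is positive for every $c>0$ precisely when $\theta=0$, since $\chi(\theta,\sigma_k)\ge\theta$ (and if $\theta=0$ then either $b=0$ or the random variable $\sum_k\sigma_k^2 g_k^2$ has positive density at $0$). So fix a normalized $\mu$-harmonic cocycle $b$; I shall prove $\theta=0$ under each hypothesis.

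For hypothesis (2), the Lipschitz estimate $\|b(x)\|\le\|b\|_Q|x|_G$ yields the inclusion $B_G(c\|b\|_Q^{-1}\sqrt{n})\subseteq\{x:\|b(x)\|\le c\sqrt{n}\}$, and therefore
\[
\limsup_n\mu^{*n}(\{x:\|b(x)\|\le c\sqrt{n}\})\ge\limsup_n\mu^{*n}(B_G(c\|b\|_Q^{-1}\sqrt{n}))>0
\]
by assumption. The left-hand sequence actually converges, by Theorem~\ref{theorem:martingaleclt}, so its limit $\IP(\chi(\theta,\sigma_k)\le c)$ is strictly positive for every $c>0$, which forces $\theta\le c$ for every $c>0$ and hence $\theta=0$.

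For hypothesis (1), I argue by contradiction and reduce to the case of a purely weakly mixing cocycle. Suppose some $b$ has $\theta>0$, and replace $b$ by its normalized weakly mixing summand $\tilde b:=b_\infty/\theta$. The subrepresentation of $\pi$ on $\cspan\tilde b(G)$ contains no nonzero finite-dimensional invariant subspace: if $\cK\subseteq\cspan\tilde b(G)$ were one, weak mixing of $b_\infty$ would give $P_\cK\tilde b=0$, so $\cspan\tilde b(G)\perp\cK\supseteq\cK$, forcing $\cK=0$. Consequently $\tilde b$ has no nonzero finite-dimensional summand, so in the notation of Theorem~\ref{theorem:martingaleclt} its parameters are $\tilde\theta=1$ with no $\tilde\sigma_k$, and hence $\|\tilde b(X_n)\|^2/n\to 1$ in probability.

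Now I compare $\mu^{*n}$ and $\mu^{*(1+\delta)n}$ on the test set $A_n:=\{x:\|\tilde b(x)\|^2<(1+\delta/2)n\}$. By the previous paragraph $\mu^{*n}(A_n)\to 1$; rescaling by $1+\delta$ gives $\|\tilde b(X_{(1+\delta)n})\|^2/n\to 1+\delta>1+\delta/2$ in probability, so $\mu^{*(1+\delta)n}(A_n)\to 0$. Therefore
\[
\|\mu^{*n}-\mu^{*(1+\delta)n}\|_1\ge 2\bigl(\mu^{*n}(A_n)-\mu^{*(1+\delta)n}(A_n)\bigr)\longrightarrow 2,
\]
contradicting $\liminf_n\|\mu^{*n}-\mu^{*(1+\delta)n}\|_1<2$. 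The main obstacle is precisely the reduction step: for a generic harmonic cocycle with $\theta\in(0,1)$ the limit $\chi(\theta,\sigma_k)^2$ has positive variance, and a direct comparison of the measures $\mu^{*n}$ and $\mu^{*(1+\delta)n}$ only yields a lower bound on $\|\mu^{*n}-\mu^{*(1+\delta)n}\|_1$ that is strictly smaller than $2$. Collapsing onto the weakly mixing part $\tilde b=b_\infty/\theta$ turns the limit into a deterministic constant and pushes the lower bound all the way to $2$.
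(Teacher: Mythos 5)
Your proof is correct and follows essentially the same route as the paper: in both, the heart of the matter is Theorem~\ref{thm:beta}/\ref{theorem:martingaleclt} applied to a weakly mixing $\mu$-harmonic cocycle (equivalently $\theta=1$), giving $\|b(X_n)\|^2/n\to 1$ in probability, from which one reads off that each hypothesis fails. The only organizational difference is that the paper proves a single stronger statement — producing open sets $E_n$ with $\mu^{*n}(E_n)\to1$ while $\mu^{*(1+\delta)n}(B_G(c\sqrt n)E_nB_G(c\sqrt n))\to0$ — and deduces the failure of both (1) and (2) from this one thickened set, whereas you dispose of (2) directly via the convergence in distribution of $\|b(X_n)\|/\sqrt n$ and of (1) via the comparison on $A_n$; both versions are sound.
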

\begin{proof}
We prove a stronger statement that if $G$ does not have property $H_{\mathrm{FD}}$, then for every $\delta>0$ there are $c>0$ and 
a sequence $(E_n)_n$ of open subsets in $G$ such that 
$\mu^{*n}(E_n)\to1$ and $\mu^{*(1+\delta)n}(B_G(c\sqrt{n})E_nB_G(c\sqrt{n}))\to0$.

Suppose that there is $\mu$-harmonic $1$-cocycle 
$b\colon G\to\cH$ without a non-zero finite-dimensional summand. We can assume that this cocycle is normalized. 
Take any $0<\delta<1$.  
Put $c:=(20\| b \|_Q)^{-1}\delta$ and 
\[
E_n:=\{ x\in G : \|b(x)\|^2 < (1+\delta/4)n \}. 
\]
Then, for every $x\in E_n$ and $y,z\in B_G(c\sqrt{n})$ one has 
\[
\|b(yxz)\|^2 \le \|b(x)\|^2+2\|b(x)\|\|b(y)+\pi_{yx}b(z)\|+\|b(y)+\pi_{yx}b(z)\|^2
 < (1+\delta/2)n
\]
Hence the result follows from Theorem~\ref{thm:beta}.
\end{proof}

\begin{rem}
By Kingman's subadditive ergodic theorem, the linear rate of escape 
\[
\lim_n \frac{1}{n} |X_n(\omega)|_G = \lim_n \frac{1}{n}\IE |X_n|_G =: l_\mu
\]
exists and is constant for a.e.\ $\omega\in (G,\mu)^{\IN}$.
Hence either of the conditions (1) and (2) 
in Corollary~\ref{cor:beta} implies that $l_\mu=0$ 
and in particular that $G$ is amenable (\cite{guivarch}). 

\end{rem}

\begin{rem}
It is known that $\IZ\wr\IZ$ does not satisfy property $H_{FD}$ (\cite[5.4.1]{shalom}).
Shalom shows that any infinite amenable group with  $H_{\mathrm{FD}}$ admits a virtual quotient to 
$\IZ$
(\cite[4.3.1]{shalom}). 
By Corollary~\ref{cor:beta}, any non-degenerate random walk on  a group without virutal homomorphisms to 
$\IZ$ (or $\IZ\wr\IZ$)
does not satisfy either of the conditions (1) or (2). 
It is apparently on open problem whether the wreath product $\IZ^2\wr(\IZ/2\IZ)$ 
has property $H_{\mathrm{FD}}$ (see \cite[6.6]{shalom});
the simple  random walk 
on it does not satisfy either of the conditions (for "switch-walk-switch" random walks  it follows from  Dvoretzky--Erd\"os theorem (\cite{de,jp}) that the number of distinct sites of 
a simple random walk on $\IZ^2$ visited until the time $n$ is asymptotically equivalent to  $cn/\log(n)$, 
where $c>0$ is a constant.
\end{rem}

\section{More on the property $H_{FD}$} \label{sec:appli}
We elaborate on Corollary~\ref{cor:beta}. It says $G$ has property 
$H_\mathrm{FD}$ provided that $(G,\mu)$ satisfies the following property. 
We say a $\mu$-random walk $X_n$ is \emph{cautious} if 
\[
\limsup_n \IP( \max_{k=1,\ldots,n} |X_k|_G < c\sqrt{n} ) > 0
\]
for every $c>0$. We look at stability of this property under extension. 
Let $N$ be a closed normal subgroup of $G$ with a length $|\,\cdot\,|_N$ 
which may not be proper. 
We say $N$ is \emph{strictly exponentially distorted} in $G$ if 
there exists a constant $C \geq 1$ such that 
\[
\frac{1}{C}\log(|h|_N+1) - C \le |h|_G \le C\log(|h|_N+1) + C
\]
for all $h\in N$. We will denote by $|\,\cdot\,|_{G/N}$ the length 
induced by the compact generating neighborhood $QN$ of $e$ in $G/N$. 

\begin{proposition}
Let $N\triangleleft G$ be a closed normal subgroup which is strictly 
exponentially distorted, and let $\bar{\mu}$ be the push-out probability 
measure of $\mu$ to $G/N$. 
If $(G/N,\bar{\mu})$ is cautious, then so is $(G,\mu)$ and in particular 
$G$ has Shalom's property $H_\mathrm{FD}$. 
\end{proposition}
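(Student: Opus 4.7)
The plan is to reduce caution of $(G,\mu)$ to caution of $(G/N,\bar\mu)$ by decomposing each $X_k$ into its projection $\bar X_k\in G/N$ together with a vertical correction in $N$. I would fix a section $\sigma\colon G/N\to G$ satisfying $|\sigma(\bar g)|_G\le|\bar g|_{G/N}+1$ and define $Y_k\in N$ by $X_k=Y_k\sigma(\bar X_k)$. A short computation then yields the vertical recursion $Y_{k+1}=Y_k\xi_{k+1}$, where
\[
\xi_{k+1}:=\sigma(\bar X_k)\,s_{k+1}\,\sigma(\bar X_{k+1})^{-1}\in N,
\]
so that $|Y_k|_N\le\sum_{j=1}^k|\xi_j|_N$ by subadditivity of $|\,\cdot\,|_N$, and $|X_k|_G\le |Y_k|_G+|\bar X_k|_{G/N}+1$ by the triangle inequality.

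Fix $c>0$ and let $c'>0$ be a parameter to be chosen in terms of $c$ and the distortion constant $C$. I would work on the intersection $A_n\cap B_n$, where
\[
A_n=\{\max_{k\le n}|\bar X_k|_{G/N}<c'\sqrt n\}\quad\text{and}\quad B_n=\{\max_{j\le n}|s_j|_G<c'\sqrt n\}.
\]
Cautiousness of $(G/N,\bar\mu)$ gives $\limsup_n\IP(A_n)>0$, while Markov's inequality on a sufficiently high moment of $|s_1|_G$ (finite by Convention) combined with a union bound gives $\IP(B_n)\to 1$; hence $\liminf_n\IP(A_n\cap B_n)>0$.

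On $A_n\cap B_n$, the triangle inequality bounds $|\xi_j|_G$ by $3c'\sqrt n+O(1)$ for every $j\le n$. The upper half of the exponential distortion estimate then yields $|\xi_j|_N\le\exp(O(c'\sqrt n))$, and summing over $k\le n$ terms gives $|Y_k|_N\le n\exp(O(c'\sqrt n))=\exp(O(c'\sqrt n))$, the factor $n$ being absorbed since $\log n=o(\sqrt n)$. Feeding this back through the lower half of the distortion estimate produces $|Y_k|_G\le O(c'\sqrt n)$, and therefore $|X_k|_G\le Kc'\sqrt n+O(\log n)$ for a constant $K$ depending only on $C$. Choosing $c':=c/(2K)$ then makes $|X_k|_G<c\sqrt n$ simultaneously for all $k\le n$ once $n$ is large. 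Hence $(G,\mu)$ is cautious, and Corollary~\ref{cor:beta} delivers Shalom's property $H_\mathrm{FD}$.

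The main obstacle to monitor is the chaining of two exponential estimates across a sum of $n$ terms: the concern is that the factor $n$ from $|Y_k|_N\le\sum|\xi_j|_N$ could inflate $|Y_k|_G$ unacceptably after applying the lower half of the distortion estimate. This works out precisely because $\log n=o(\sqrt n)$, and the composition of the two halves of the distortion inequality is a linear map $c'\sqrt n\mapsto O(c'\sqrt n)$ with coefficient of order $C^2$, leaving enough room to choose $c'$ small. A minor secondary point is that the section $\sigma$ need only be chosen pointwise as an almost-infimal lift, so no measurability or continuity is required.
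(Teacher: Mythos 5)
Your proof is correct and follows essentially the same route as the paper's: decompose the trajectory into a horizontal ($G/N$) part and a vertical ($N$) part (the paper lifts each $g_k$ to a shortest coset representative via some $h_k\in N$, you instead fix a near-infimal section $\sigma$ and track $Y_k=X_k\sigma(\bar X_k)^{-1}$, which is the same bookkeeping), push the $G$-length bound on the vertical increments into $N$-length via one direction of the exponential distortion inequality, sum, and pull back via the other direction, the key point being that the factor $n$ from summing becomes $\log n=o(\sqrt n)$ after taking the logarithm. One small slip: from $\limsup_n\IP(A_n)>0$ and $\IP(B_n)\to 1$ you may only conclude $\limsup_n\IP(A_n\cap B_n)>0$, not $\liminf$, but since cautiousness is a $\limsup$ condition this does not affect the argument.
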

\begin{proof}
It suffices to show that there is a constant $D\geq1$ with the following 
property (cf.\ \cite[Lemma~3.4]{thompson}). 
Let $s_i\in G$ be such that $|s_i|_G\le 1$ and put 
$g_k:=s_1\cdots s_k \in G$ and $M_n:=\max_{k=1,\ldots,n} | g_k N|_{G/N} $. 
Then, one has $\max_{k=1,\ldots,n}|g_k|_G\le D( M_n + \log n + 1 )$. 
To show such $D$ exists, 
for each $k$, pick $h_k\in N$ such that $|g_k^{-1}h_k|_G = | g_k^{-1} N |_{G/N}\le M_k$. 
Then, $|h_{k-1}^{-1}h_{k}|_G\le 2M_k+1\le 3M_k$ and 
so $|h_{k-1}^{-1}h_{k}|_N\le \exp(4CM_k)$. 
Hence $|h_k|_N \le n\exp(4CM_n)$ for all $k\le n$ and so 
\[
\max_{k=1,\ldots,n}|h_k|_N\le C\log( 2n \exp(4CM_n) ) \le (D-1)( M_n + \log n + 1) 
\]
for some constant $D\geq 1$. Since $|g_k^{-1}h_k|_G\le M_n$, we are done. 
\end{proof}

Shalom (\cite[Theorem 1.13]{shalom}) has shown that polycyclic groups 
have property $H_\mathrm{FD}$ by invoking Delorme's theorem (\cite{delorme}) 
that connected solvable Lie groups have the corresponding property, and 
asked if there is another proof of $H_\mathrm{FD}$. 
It is plausible that all connected solvable groups are cautious. 
We note that in light of Osin's result (\cite{osin}) this problem reduces 
to the case for connected Lie groups with polynomial volume growth. 

\begin{corollary}
Let $K$ be a non-archimedean local field and $\IZ^d\curvearrowright K^n$ be 
a semi-simple linear action such that the semi-direct product $\IZ^d\ltimes K^n$ 
is compactly generated. Then, $\IZ^d\ltimes K^n$ has 
Shalom's property $H_\mathrm{FD}$. 
\end{corollary}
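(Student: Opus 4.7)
The plan is to apply the preceding Proposition with the closed normal subgroup $N:=K^n\triangleleft G$, equipped with the length $|h|_N:=\max_i|h_i|_K$ induced by the non-archimedean absolute value on $K$. The quotient $G/N$ is $\IZ^d$, and the push-out $\bar\mu$ is a symmetric probability measure on $\IZ^d$ with all finite moments (inherited from $\mu$). Donsker's invariance principle then gives that $\max_{k\le n}|X_k|_{\IZ^d}/\sqrt{n}$ converges in distribution to $\sup_{t\in[0,1]}|B_t|$, whence $\IP(\max_{k\le n}|X_k|<c\sqrt{n})\to\IP(\sup_{[0,1]}|B|<c)>0$ for every $c>0$, so $(\IZ^d,\bar\mu)$ is cautious. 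Once strict exponential distortion of $N$ in $G$ is established, the Proposition hands back cautiousness of $(G,\mu)$ and Corollary~\ref{cor:beta} delivers Shalom's property $H_\mathrm{FD}$.

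For the lower distortion bound, a word of length $k$ in compact generators of $G$ produces $(g,v)$ where $v\in K^n$ is a sum of at most $k$ vectors $\pi_{g'}(u)$ with $u$ in a fixed compact subset of $K^n$ and $|g'|_{\IZ^d}\le k$; since the operator norm of the $\IZ^d$-action on $K^n$ is at most $\exp(C|g'|_{\IZ^d})$, this gives $|v|_K\le C'\exp(Ck)$ and the desired inequality. The upper distortion bound is where semi-simplicity does the work. Write $K^n=W_1\oplus\cdots\oplus W_s$ as a direct sum of $K$-isotypic $\IZ^d$-subrepresentations (semi-simplicity ensures that this is a genuine direct sum). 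Each $W_j\otimes_K\bar K$ splits into eigenlines with Galois-conjugate characters $\chi_{j,\bullet}\colon\IZ^d\to\bar K^*$, which share a common absolute value $|\chi_j|\colon\IZ^d\to\IR_{>0}$ thanks to the unique extension of $|\cdot|_K$ to $\bar K$. The key claim is that $|\chi_j|$ is unbounded on $\IZ^d$ for every nonzero $W_j$: otherwise the compactly generated quotient $G\twoheadrightarrow\IZ^d\ltimes W_j$ would force the $\IZ^d$-orbit of any compact subset of $W_j$ into a fixed compact-open subgroup, whereas $W_j\cong K^{m_j}$ is itself not compactly generated as a topological group.

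Pick $a_j\in\IZ^d$ with $|\chi_j(a_j)|\ge q>1$. For $v^{(j)}\in W_j$ nonzero of $K$-norm $M_j$, set $k_j:=\lceil\log M_j/\log q\rceil$; then $a_j^{-k_j}v^{(j)}$ has $K$-norm at most $1$, so lies in a fixed compact-open subgroup of $W_j$ that we arrange to sit inside the generating neighborhood, and the identity
\[
(e,v^{(j)})=(a_j^{k_j},0)\cdot(e,a_j^{-k_j}v^{(j)})\cdot(a_j^{-k_j},0)
\]
exhibits $(e,v^{(j)})$ as a $G$-word of length $O(\log M_j)$. Summing over the fixed number of isotypic components realizes any $h=\sum_j v^{(j)}\in K^n$ by a $G$-word of length $O(\log(|h|_N+1))$, yielding the upper bound. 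The principal obstacle is the unboundedness of each $|\chi_j|$, which is precisely the step where compact generation of $G$ fuses with semi-simplicity; the latter is indispensable, since Jordan blocks would produce polynomial distortion on top of the exponential and break the asymptotic $|h|_G\asymp\log(|h|_N+1)$ required by the Proposition.
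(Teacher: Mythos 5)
Your proposal is correct and follows the same route as the paper: both reduce to the preceding Proposition by taking $N=K^n$ and observing that the pushed-forward walk on $\IZ^d$ is cautious. The paper's own proof is quite terse (it simply picks $\mu=\tfrac{1}{2}(\nu_0+\nu_1^{\otimes n})$ and asserts cautiousness, leaving the strict exponential distortion of $K^n$ in $\IZ^d\ltimes K^n$ to the reader), whereas you carry out the details explicitly — the Donsker step, and, more substantially, the proof that semi-simplicity and compact generation force each isotypic absolute-value character to be unbounded and hence yield the logarithmic distortion bound — which is a faithful completion of what the paper leaves implicit.
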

\begin{proof}
Let $\nu_0$ be the standard nearest neighborhood random walk on $\IZ^d$ and 
$\nu_1$ be a uniform probability measure on the compact subgroup 
$\{ x\in K : |x|\le 1\}$. 
Since $(\IZ^d,\nu_0)$ is cautious, for $\mu=\frac{1}{2}(\nu_0+\nu_1^{\otimes n})$, 
the random walk $(\IZ^d\ltimes K^n, \mu)$ is cautious. 
\end{proof}
\section{Harmonic cocycle $b_{\mu, U}$ constructed from differences of shifts of  $\mu^{*n}$}\label{sec:bmu}
In this section, we give a rather ``explicit'' (although we crucially use a non-principal ultrafilter) 
construction of a non-zero harmonic cocycle on a group that does not satisfy Kazhdan's property (T). 
In particular, when $G$ is a discrete finitely generated amenable group, a 
normalized $\mu$-harmonic cocycle $b_{\mu}$ will be obtained as an ultralimit of 
the sequence $\mu^{*n} - g\mu^{*n} \in \ell_2(G)$ after normalization. 
Throughout this section, we assume (in addition to Convention) that $\mu$ is compactly supported 
and $\mu=\mu'^{*2}$ for some symmetric probability measure $\mu'$ on $G$.

We fix a non-principal ultrafilter $\cU$ on $\IN$ and denote by ${\lim}_{\cU}$ 
the corresponding ultralimit. 
Then, the ultrapower Hilbert space $\cH^\cU$ of 
a given Hilbert space $\cH$ is defined to be 
\[
\cH^\cU := \ell_\infty(\IN;\cH)/\{ (v_n)_{n=1}^\infty : {\lim}_{\cU}\,\| v_n \|=0 \}
\]
with the inner product $\ip{ [v_n']_n, [v_n]_n}:= {\lim}_{\cU}\,\ip{v_n',v_n}$, 
where $[v_n]_n$ is the equivalence class of $(v_n)_n\in\ell_\infty(\IN;\cH)$. 
An orthogonal representation $\pi$ of $G$ on $\cH$ gives rise 
to the ultrapower representation $\pi^\cU$ on $\cH^\cU$ 
by $\pi^\cU_g[v_n]_n = [\pi_g v_n ]_n$. 
(NB: In general, the ultrapower representation is no longer continuous.)
We apply this construction to an orthogonal representation $(\pi,\cH)$ which 
admits an approximate invariant vectors but no non-zero invariant vectors. 
By definition, such an orthogonal representation exists if and only if 
$G$ does not satisfy Kazhdan's property (T) (see \cite{bhv}). 
\begin{lemma}\label{lem:non-t}
Let $(\pi,\cH)$ be an orthogonal representation 
which admits an approximate invariant vectors but no non-zero invariant vectors, 
and consider the positive and contractive operator $T:=\pi(\mu)$ on $\cH$. 
Then, there is a unit vector $v \in \cH$ such that the corresponding 
probability measure $\nu$ on $[0,1]$, defined by the formula 
\[
\int_0^1 t^n\,d\nu(t) = \ip{ T^n v,v},
\]
satisfies $1\in\supp\nu$ and $\nu(\{1\})=0$.
\end{lemma}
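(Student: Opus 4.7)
The operator $T=\pi(\mu)=\pi(\mu')^2$ is self-adjoint, positive, and contractive, so by the spectral theorem each unit vector $v\in\cH$ produces a probability spectral measure $\nu_v$ on $[0,1]$ satisfying $\int_0^1 t^n\,d\nu_v(t)=\ip{T^nv,v}$. My plan is first to verify that $\nu_v(\{1\})=0$ holds automatically for \emph{every} unit vector $v$, so atomlessness at $1$ comes for free, and then to construct a single $v$ whose spectral measure has mass arbitrarily close to $1$.

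For the atom-free condition I would show $E_T(\{1\})=0$. Any $v$ with $Tv=v$ and $\|v\|=1$ satisfies $v=\int\pi_g v\,d\mu(g)$, and strict convexity of the Hilbert ball forces $\pi_gv=v$ for $\mu$-a.e.\ $g$ (this is the argument recalled in the discussion preceding Lemma~\ref{lem:harmonicpart}). Because $\mu$ has density bounded below on the open generating neighborhood $Q$ and $g\mapsto\pi_gv$ is continuous, the closed set $\{g:\pi_gv=v\}$ must contain $Q$ and hence all of $G$. The assumption that $\pi$ has no non-zero invariant vectors then forces $v=0$, so $E_T(\{1\})=0$ and $\nu_v(\{1\})=0$ for every unit $v$.

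Next, since $\mu$ is compactly supported and $\pi$ admits almost invariant vectors, there are unit vectors $u_n$ with $\|\pi_gu_n-u_n\|\to 0$ uniformly on $\supp\mu$; integrating yields $\|Tu_n-u_n\|\to 0$, so $1\in\mathrm{spec}(T)$. Set $P_k:=E_T([1-2^{-k},1])$. Then $P_k\neq 0$ for every $k$, while $\bigcap_kP_k\cH=E_T(\{1\})\cH=0$ by the first step, so after passing to a subsequence we may assume $P_{k_j}\supsetneq P_{k_{j+1}}$. Choose unit vectors $w_j\in(P_{k_j}-P_{k_{j+1}})\cH$; these are pairwise orthogonal because they live in disjoint spectral subspaces. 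Set
\[
v:=\Bigl(\sum_j 2^{-j}\Bigr)^{-1/2}\sum_j 2^{-j/2}w_j.
\]
The spectral measure of $v$ is the corresponding convex combination of the $\nu_{w_j}$, and each $\nu_{w_j}$ is a probability measure supported in $[1-2^{-k_j},1-2^{-k_{j+1}})$; hence $\nu_v([1-2^{-k_j},1])>0$ for every $j$, giving $1\in\supp\nu_v$ as required.

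I expect the main conceptual step to be the first: reducing the no-atom-at-$1$ question to the hypothesis of no $\pi(G)$-invariant vectors, via strict convexity together with the density lower bound of $\mu$ on a generating set. Once that is in hand, the spectrum of $T$ accumulates at $1$ without an atom there, and the diagonal construction above produces a vector whose own spectral measure inherits both features.
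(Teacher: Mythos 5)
Your proof is correct and follows essentially the same route as the paper: deduce $E_T(\{1\})=0$ from the absence of invariant vectors, note $1\in\mathrm{spec}(T)$ from almost-invariance, and then pick a unit vector $v$ whose spectral measure charges every interval $[1-1/n,1]$. The only difference is that you supply details the paper elides, namely the strict-convexity/density argument that no eigenvector of eigenvalue $1$ exists, and the explicit series construction of $v$ from pairwise orthogonal spectral pieces, where the paper simply asserts that such a $v$ exists.
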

\begin{proof}
Let $E_T$ denote the spectral measure corresponding to the self-adjoint operator $T$.
Since $(\pi,\cH)$ admits approximate invariant vectors, 
the spectrum of $T$ contains $1$, which means that $E_T([1-1/n,1])\neq0$ for any $n$. 
Hence, there is a unit vector $v \in \cH$ such that $E_T([1-1/n,1])v \neq 0$ for any $n$. 
On the other hand, $E_T(\{1\})=0$ since $(\pi,\cH)$ has no non-zero invariant vectors. 
The probability measure $\nu(\,\cdot\,):=\ip{E_T(\,\cdot\,)v,v}$ corresponding to $v$ 
satisfies the desired conditions.
\end{proof}

Take $(\pi,\cH,v)$ as above and put $T = \pi(\mu)$. 
In case $G$ is a discrete finitely generated infinite amenable group, 
one can take $(\pi,\cH,v)$ to be $(\lambda,\ell_2(G),\delta_e)$ by 
Kesten's theorem (\cite{kesten}). 
Consider the coboundary $c_n\colon G\to \cH$ 
given by $c_n(g) = T^{n/2}v - \pi(g)T^{n/2}v$ and its normalization 
$b_n := \| c_n \|_{L^2(\mu)}^{-1}c_n$. 
We note that 
\[
\| c_n \|_{L^2(\mu)}^2 = 2\ip{ (T^{n} - T^{n+1})v,v} = 2\int_0^1 t^{n}(1-t)\,d\nu(t).
\]
We will define the cocycle $b_{\mu}$ to be the ultralimit of $b_n$. 
For continuity of $b_{\mu}$, we need equi-continuity of $b_n$'s. 
Observe that for every $g\in G$, one has 
\[
c_n(g) = - \int_G (\frac{d\mu}{dm} -g\frac{d\mu}{dm})(x) c_{n-2}(x)\,dm(x).
\]
Let $K=Q\supp \mu$ (recall that $Q$ is a relatively compact generating subset of $G$ 
and that $\supp\mu$ is assumed compact) and 
take a constant $C$ which satisfies $\| c \|_{K}\le C\| c \|_{L^2(\mu)}$ for 
every cocycle $c$ (see Lemma~\ref{lemmaequivnorms}).
Then by the above equality, for every $g\in Q$, one has 
\[
\| b_n(g) \| \le 
 \frac{\|c_{n-2}\|_{K}}{\|c_n\|_{L^2(\mu)}}\cdot
\| \frac{d\mu}{dm} -g\frac{d\mu}{dm} \|_{L^1}
\le C\frac{\|c_{n-2}\|_{L^2(\mu)}}{\|c_n\|_{L^2(\mu)}}\cdot
\| \frac{d\mu}{dm} -g\frac{d\mu}{dm} \|_{L^1}.
\]
Since $\frac{d\mu}{dm} \in L^1(G)$, 
the function $g\mapsto \| \frac{d\mu}{dm} -g\frac{d\mu}{dm} \|_{L^1}$ is continuous. 
Thus, equi-continuity of $b_n$'s follows from the following auxiliary lemma. 
\begin{lemma}\label{lem:auxiliary}
Let $\nu$ be a probability measure on $[0,1]$ such that $1\in\supp\nu$ and $\nu(\{1\})=0$. 
Then, $\gamma(n):=\int_0^1 t^n(1-t)\,d\nu(t)$ satisfies $\gamma(n)\searrow0$ 
and $\gamma(n+1)/\gamma(n) \nearrow 1$.
\end{lemma}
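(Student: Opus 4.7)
The plan is to verify the four claims in turn: the monotone decrease of $\gamma(n)$, the convergence $\gamma(n)\to 0$, the monotone increase of the ratio $r_n:=\gamma(n+1)/\gamma(n)$, and finally the convergence $r_n\to 1$. The first two are routine. Since $t\in[0,1]$, one has $t^{n+1}(1-t)\le t^n(1-t)$ pointwise, so $\gamma(n+1)\le\gamma(n)$. Because $t^n(1-t)\to 0$ for every $t\in[0,1)$ while $\nu(\{1\})=0$, the dominated convergence theorem (with dominant function $1-t$) gives $\gamma(n)\to 0$.

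For the monotonicity of $r_n$, I would invoke the Cauchy--Schwarz inequality for the finite positive measure $(1-t)\,d\nu(t)$, splitting $t^{n+1}=t^{n/2}\cdot t^{(n+2)/2}$. This yields log-convexity
\[
\gamma(n+1)^2=\Bigl(\int t^{n/2}\cdot t^{(n+2)/2}(1-t)\,d\nu(t)\Bigr)^2\le \gamma(n)\,\gamma(n+2),
\]
equivalently $r_n\le r_{n+1}$. Combined with $r_n\le 1$, the sequence is nondecreasing and bounded, so has a limit $L\in(0,1]$.

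The main point is then to show $L=1$, and this is the step where the two hypotheses on $\nu$ are both needed. Suppose for contradiction that $L<1$ and pick $\varepsilon>0$ so small that $1-\varepsilon>L$. The telescoping bound $\gamma(n)\le\gamma(0)\prod_{k=0}^{n-1}r_k$ together with $r_k\le L$ gives $\gamma(n)\le CL^n$ for some constant $C$. On the other hand, since $1\in\supp\nu$ one has $\nu((1-\varepsilon,1])>0$, and since $\nu(\{1\})=0$ this gives $\nu((1-\varepsilon,1))>0$. By continuity from below of $\nu$, there exists $\delta\in(0,\varepsilon)$ with $\nu((1-\varepsilon,1-\delta))>0$, and then
\[
\gamma(n)\ge \int_{(1-\varepsilon,1-\delta)}t^n(1-t)\,d\nu(t)\ge c(1-\varepsilon)^n
\]
for a constant $c>0$. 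Since $1-\varepsilon>L$, this contradicts $\gamma(n)\le CL^n$ for large $n$, so $L=1$.

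The only step that requires genuine care is this last one: the monotonicity gives a candidate limit $L$ for free, but identifying $L=1$ is precisely where one must exploit the compatibility between the positive mass of $\nu$ near $1$ (from $1\in\supp\nu$) and the asymptotic rate of decay of $\gamma(n)$ forced by the ratio test.
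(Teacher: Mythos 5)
Your proof is correct and follows the paper's argument very closely: the Cauchy--Schwarz step giving $\gamma(n+1)^2\le\gamma(n)\gamma(n+2)$, the resulting monotone limit $L=\delta\le 1$, and the contradiction argument starting from $\gamma(n)\le CL^n$ are all the same. The only divergence is in how you close the contradiction: the paper sums the tail $\sum_{k\ge n}\gamma(k)=\int t^n\,d\nu(t)\le C'\delta^n$ (using $\nu(\{1\})=0$ to justify this identity $\nu$-a.e.) and concludes $\supp\nu\subset[0,\delta]$, whereas you lower-bound $\gamma(n)$ directly by $c(1-\varepsilon)^n$ using the positive mass on an interval $(1-\varepsilon,1-\delta)$ bounded away from $1$; this is where you invoke $\nu(\{1\})=0$ to push the mass strictly below $1$ so the factor $(1-t)$ stays bounded away from $0$. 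Both variants are valid and equally short; yours is marginally more self-contained in that it avoids the telescoping identity for $\int t^n\,d\nu$, while the paper's gives a slightly cleaner statement (containment of the support) as an intermediate conclusion.
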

\begin{proof}
The first assertion is obvious. 
Since 
\[
\gamma(n+1) = \int t^{n/2}(1-t)^{1/2}\cdot n^{(n+2)/2}(1-t)^{1/2}\,d\nu(t)
\le\gamma(n)^{1/2}\gamma(n+2)^{1/2},
\]
the sequence $\gamma(n+1)/\gamma(n)$ is increasing and has 
a limit $\delta\le1$. Suppose for a contradiction that $\delta<1$. 
Then, one has $\gamma(n) \le C\delta^n$ and so 
$\int_0^1 t^n\,d\nu(t)=\sum_{k=n}^\infty\gamma(k)\le C'\delta^n$ 
for every $n$, where $C$ and $C'$ are some constant independent of $n$.
This implies $\supp\nu\subset[0,\delta]$, a contradiction.
Hence $\delta=1$.
\end{proof}

Since $b_n$'s are equi-continuous and $\| b_n(g) \| \le |g|_G\| b_n \|_Q$ is 
bounded for each $g$, the formula
\[
b_{\mu}(g) := [ b_n(g) ]_n \in \cH^\cU
\]
defines a continuous map such that 
$b_{\mu}(gh)=b_{\mu}(g)+\pi^{\cU}_g b_{\mu}(h)$. 
Since $b_{\mu}$ is continuous, the ultrapower orthogonal representation 
$\pi^{\cU}$ is continuous when restricted to $\cspan b(G)$.
Hence $b_{\mu}$ is a $1$-cocycle.
It is normalized:
\[
\| b_{\mu} \|_{L^2(\mu)}^2 = \int {\lim}_{\cU}\|b_n(x)\|^2\,d\mu(x)
= {\lim}_{\cU}\int \|b_n(x)\|^2\,d\mu(x)=1,
\] 
where, to interchange the ultralimit and integration, we have used the fact 
that $\mu$ is compactly supported and $b_n$'s are equi-continuous. 
The constructed $1$-cocycle $b_\mu$ may depend 
on the choice of a non-principle ultrafilter $\cU$ (see Theorem \ref{dependencesubsequence}), 
and we will write $b_{\mu,\cU}$ instead of $b_\mu$ when we want to emphasize the role 
of the ultrafilter $\cU$. 
The following reproves the results of Mok (\cite{mok}), Korevaar--Schoen (\cite{ks}), 
and Shalom (\cite{shalom-invent}) mentioned in Introduction. 

\begin{theorem}\label{thm:bmu}
Let $G$ be a compactly generated locally compact group 
which does not have Kazhdan's property {\normalfont (T)} and 
$\mu$, $(\pi,\cH,v)$, and $b_{\mu}$ be as above. 
Then, $b_{\mu}$ is a normalized $\mu$-harmonic cocycle. 
\end{theorem}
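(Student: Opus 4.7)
The hard part of Theorem \ref{thm:bmu} is harmonicity: everything else (continuity, the cocycle identity, normalization) has already been done in the preceding paragraphs. So my plan is to show that the vector $\int_G b_\mu(x)\,d\mu(x) \in \cH^\cU$ is zero, or equivalently that its norm vanishes.

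The first step is to represent this integral at the level of representatives. Because $\mu$ is compactly supported and the family $(b_n)$ is equi-continuous (as established just before the theorem), for every $w = [w_n]_n \in \cH^\cU$ I can interchange the ultralimit with integration against $\mu$ to get
\[
\ip{ \int b_\mu(x)\,d\mu(x),\, w } = {\lim}_{\cU}\, \ip{ \int b_n(x)\,d\mu(x),\, w_n }.
\]
In particular $\|\int b_\mu(x)\,d\mu(x)\|^2 = {\lim}_{\cU}\,\|\int b_n(x)\,d\mu(x)\|^2$, so it suffices to estimate the latter scalar quantity and check that it tends to $0$.

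The second step is an explicit spectral calculation. Since $c_n(g) = T^{n/2}v - \pi_g T^{n/2}v$, averaging against $\mu$ gives
\[
\int c_n(x)\,d\mu(x) = T^{n/2}v - T^{n/2+1}v.
\]
Using the spectral measure $\nu$ of $T$ at $v$ (as in Lemma~\ref{lem:non-t}), I compute
\[
\Bigl\| \int c_n(x)\,d\mu(x) \Bigr\|^2 = \ip{T^n(1-T)^2 v, v} = \int_0^1 t^n(1-t)^2 d\nu(t) = \gamma(n)-\gamma(n+1),
\]
where $\gamma(n) = \int_0^1 t^n(1-t)\,d\nu(t)$ is the quantity appearing in Lemma~\ref{lem:auxiliary}, and $\|c_n\|_{L^2(\mu)}^2 = 2\gamma(n)$. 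Dividing by $\|c_n\|_{L^2(\mu)}^2$ therefore yields
\[
\Bigl\| \int b_n(x)\,d\mu(x) \Bigr\|^2 = \frac{1}{2}\Bigl( 1 - \frac{\gamma(n+1)}{\gamma(n)} \Bigr).
\]

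The last step is to invoke Lemma~\ref{lem:auxiliary}, which says $\gamma(n+1)/\gamma(n) \nearrow 1$. Hence the displayed quantity tends to $0$ as $n\to\infty$, and in particular along $\cU$. Combined with step one, this proves $\int b_\mu(x)\,d\mu(x)=0$, i.e.\ $b_\mu$ is $\mu$-harmonic. I do not anticipate a real obstacle here — the main technical point is the justification of the interchange of ultralimit and integration in step one, which is ensured by compact support of $\mu$ together with the equi-continuity bound $\|b_n\|_Q \le \mathrm{const}$ coming from the auxiliary estimate used to show $b_\mu$ is continuous.
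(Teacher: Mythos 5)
Your proof is correct and follows essentially the same route as the paper: both reduce harmonicity to showing $\|\int b_n\,d\mu\|^2 = (\gamma(n)-\gamma(n+1))/(2\gamma(n)) \to 0$ via Lemma~\ref{lem:auxiliary}, and both use equi-continuity plus compact support of $\mu$ to pass the ultralimit through the integral. The only cosmetic difference is that you compute the norm of $\int b_\mu\,d\mu$ directly, while the paper tests the integral against arbitrary $v'=[v_n']_n\in\cH^\cU$ and concludes the weak integral vanishes; these are equivalent since your step one identifies $\int b_\mu\,d\mu$ with $[\int b_n\,d\mu]_n$.
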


\begin{proof}
It only remains to prove that $b_{\mu}$ is harmonic. 
Put $\gamma(n)=\int t^{n}(1-t)\,d\nu(t)$. Then, one has
\[
\| \int b_n(x)\,d\mu(x) \|^2 
 = \frac{\gamma(n)-\gamma(n+1)}{2\gamma(n)}
 \to 0
\]
by Lemma~\ref{lem:auxiliary}.
Hence, for every $v'=[v_n']_n \in \cH^{\cU}$, one has
\begin{align*}
\ip{\int b_{\mu}(x)\,d\mu(x),v'} &= \int {\lim}_{\cU}\ip{ b_n(x),v_n'}\,d\mu(x)\\
 &= {\lim}_{\cU}\int \ip{ b_n(x),v_n'}\,d\mu(x) \\
 &=  {\lim}_{\cU} \ip{ \int b_n(x)\,d\mu(x), v_n'} = 0.
\end{align*}
This means $\int b_{\mu}(x)\,d\mu(x)=0$ and $b_\mu$ is harmonic. 
\end{proof}

In case $G$ is a discrete amenable group and $(\pi,\cH,v)=(\lambda,\ell_2(G),\delta_e)$, 
a computation yields that 
\[
\| c_n \|_{L^2(\mu)}^2 = 2 (\mu^{*n}(e) - \mu^{*n+1}(e))
\]
and 
\[
\|b_{\mu}(g)\|^2 = {\lim}_{\cU} \| b_n(g) \|^2 
 ={\lim}_{\cU}\frac{\mu^{*n}(e)- \mu^{*n}(g)}{\mu^{*n}(e) - \mu^{*n+1}(e)}.
\]
\begin{proof}[Proof of Theorem \ref{theoremcorollary}]
By Theorem~\ref{thm:beta} we know that 
$\ie{ | \frac{\|c(X_m)\|^2}{m} - 1 |^2}  \to0$ for any normalized harmonic 
cocycle $c$ without non-zero finite-dimensional summands.
We will show that in case $G$ does not admit any non-zero harmonic 
finite-dimensional cocycle (which is the case when $G$ is a finitely generated 
amenable group without virtually abelian infinite quotients), 
this convergence is uniform for normalized 
harmonic cocycles $c$ on $G$.
Indeed, we have seen in the proof of  Theorem~\ref{thm:beta}  that 
\[
\ie{ | \frac{\|c(X_m)\|^2}{m} - 1 |^2 }
 \le \frac{4}{m^2}\ip{  \sum_{k=1}^{m-1} (m-k)T^{k-1}w,w} + \frac{1}{m}\|c\|_Q^4\ie{|X_1|_G^4}
\to0
\]
for every normalized $\mu$-harmonic $1$-cocycle $c$, 
where $T = \int (\pi\otimes\bar{\pi})_g\,d\mu(g)$ 
and $w=\int (c\otimes\bar{c})(g)\,d\mu(g)$. 
Note that $\|c\|_Q$ is uniformly bounded by Lemma~\ref{lemmaequivnorms}.
Therefore, it suffices to prove that $\lim_k \|T^kw\|=0$ uniformly for $c$.
Suppose that the latter is not the case: there are $\ve>0$, 
a subsequence $k_m\to\infty$, and 
normalized harmonic cocycles $c_m$ with the corresponding $T_m$ and $w_m$ such that 
$\| T_m^{k_m}w_m\|\geq\ve$ for all $m$. 
Fix a non-principal ultrafilter $\cU$ and 
let $c_{\cU}$ denote the $\cU$-ultralimit cocycle of the sequence $(c_m)_m$,
with the corresponding objects denoted by $T_{\cU}$ and $w_{\cU}$.
Then, $c_{\cU}$ is a normalized harmonic cocycle. 
Moreover since $t^{2k}$ is decreasing in $k$ for any $t\in[-1,1]$, 
one has for each $k$ 
\[
\ip{T_{\cU}^{2k}w_{\cU},w_{\cU}}={\lim}_{\cU}\ip{T_m^{2k}w_m,w_m}
\geq {\lim}_{\cU}\ip{T_m^{2k_m}w_m,w_m} \geq\ve^2.
\]
Let $Q$ denote the spectral projection of $T_{\cU}$ corresponding to 
eigenvalues $\{ -1,+1\}$. 
Then, 
\[
\| Qw_{\cU}\|^2
=\lim_k \ip{k^{-1}(1+T^{2}+T^{4}+\cdots+T^{2(k-1)})w_U,w_U}\geq\ve^2. 
\]
Since $T_{\cU}^2 Qw_{\cU}=Qw_{\cU}$, 
the vector $Qw_{\cU}$ is invariant under $(\pi\otimes\bar{\pi})_g$ for all $g\in\supp\mu^{*2}$. 
However since $G_0:=\ip{\supp\mu^{*2}}$ has finite-index in $G$, 
it does not admit a non-zero $\mu^{*2}$-harmonic $1$-cocycle, 
which implies that $Qw_{\cU}=0$ (as discussed in the proof of Theorem~\ref{thm:beta}).
We have arrived at a contradiction. 

It follows that if $G$ satisfies the assumption of Theorem~\ref{theoremcorollary}, then 
$\ie{ | \frac{\|c(X_m)\|^2}{m} - 1 |^2}  \to0$ uniformly for 
normalized $\mu$-harmonic $1$-cocycles $c$.
In particular, 
\[
\lim_m\limsup_n \ie{ | \frac{\|b_n(X_m)\|^2}{m} - 1 |^2} 
 = \lim_m\sup_{\cU} \ie{ | \frac{\|b_{\mu,\cU}(X_m)\|^2}{m} - 1 |^2} = 0.
\]
(Note that $\limsup_n \lambda_n = \sup_U\lim_{\cU}\lambda_n$ for any bounded sequence $\lambda_n$.)
Since 
\[
\frac{1}{m}=\lim_n \frac{\mu^{*n}(e)-\mu^{*n+1}(e)}{\mu^{*n}(e)-\mu^{*n+m}(e)}
\] 
by 
Lemma~\ref{lem:auxiliary}, this completes the proof of Theorem~\ref{theoremcorollary} 
(after exchanging $\mu$ with $\mu^{*2}$).
\end{proof}

\section{$\ell_p$-thin subgroups} \label{sec:pthin}

\subsection{Definitions}

Take a finitely generated group $G$ equipped with a probability measure $\mu$, and ask again what information about its  subgroups and  quotient groups one can obtain by looking on  the behavior the random walk $(G,\mu)$. To ensure the existence of non-trivial quotients, we may search normal subgroups of $G$ defined by convolutions of $G$. A more general question one can ask is what are possible (not necessarily normal subgroups) defined in such terms.

\begin{definition} \label{definitionsubgroup} [$\ell_p$-thin subgroups $H_{\mu,p,q}$].
Let $G$ be an infinite group generated by a finite set $S$, and $\mu$ be a probability measure on $G$.
 Fix some $q\ge 0$, $p \ge 1$ and a sequence $n_i$ tending to $\infty$. Assume that $\mu$ is such that $(\mu^{*n})^q$ is in $l_p(G)$  for all $n$(this holds for example if
$\mu$ has finite support).
Let  $\alpha(n)$ denotes the maximum of  $\ell_p$ norm of  $(\mu^{*n})^q - g(\mu^{*n})^q$, where the maximum is taken over $g \in S$.
Consider  $g \in G$ for which 
   $||(\mu^{*n_i})^q - g(\mu^{*n_i})^q||_p/\alpha(n_i) \to 0$    as $i \to \infty$.  If $G$ contains at least two elements, then
by the triangular inequality in $\ell_p$, such elements form a subgroup of $G$, which we we call {\it the main $\ell_p$-thin subgroup} and which we 
denote  by  $H_{\mu,p,q, n_i}$
(and $H_{\mu,p}$ for short, if $n_i$ is specified and $q=1$).

Now we define $\ell_p$-thin subgroups associated an arbitrary function $\alpha(n)$. 
Consider $g$ such that 
   $||(\mu^{*n})^q - g(\mu^{*n})^q||_p/\alpha(n)$ tends to $0$    as $n$ tends to infinity.
By triangular inequality in $\ell_p$ such elements form a subgroup of $G$, which we denote 
$H_{\mu,p,q, \alpha}$. We call this subgroup $\ell_p$-thin subgroup associated to $\alpha(n)$.

\end{definition}

\begin{rem}\label{rem:0}
For $q=0$ in the definition above we use the convention $0^0=0$; the $\ell_1$ norm in this case is therefore
the cardinality of the symmetric differences of the supports of $\mu^{*n}$  and  $g \mu^{*n}$, that is the cardinality of the set of points $x$ such that either $x$ is in the support of $\mu^{*n}$ and $gx$ is not in this support or vice versa.  In the definition we have assumed that $p\ge 1$. We can extend the definition for the case $p=0$, defining
$\alpha(n)$ as the maximum of the cardinality of the support  of $(\mu^{*n})^q - g(\mu^{*n})^q$, where the maximum is taken over $g \in S$.
In this case we obtain $H_{0,1, \mu}= H_{1,0,\mu}$ for all $\mu$.
Observe that that if the support of $\mu$ is a finite symmetric generating set containing the identity, then the support of $\mu^{*n}$ is the ball of radius $n$ in the word metric associated to $S$.
\end{rem}

It is clear that the scaling sequence $\alpha(n)$ depends of a finite generating set $S$ up to multiplication by a constant only, and thus the definition of main $\ell_p$-thin subgroups does not depend on the choice of $S$.

In many situation the limit behavior of  $(\mu^{*n})^q - g(\mu^{*n})^q$ does not depend on the subsequence of possible $n$'s. However, in some situation this quantity, and the corresponding $\ell_p$-thin
subgroups may depend on the choice of a subsequence, see Theorem C and Corollary \ref{cor:lpthin}.

\begin{rem}\label{rem:changep}
If  $p\ge 1$,  it is known that a normalized sequence $v_n \in \ell_1(G)$ is almost invariant in $\ell_1$ with respect to the shift by some element  $g \in G$  if and only if $v_n^{1/p}$ (which is clearly a sequence in $\ell_p(G)$) is almost invariant in $\ell_p$ with respect to the shift by $g$
(see e.g. the proof of Theorem 8.3.2  in \cite{reiterbook}. This implies that the main $\ell_p$-thin subgroups satisfy
$H_{p,1} = H_{1,p}=H_{p/q, q}$ for any $p,q\ge 1$ whenever $(\mu^{*n_i})^p$ does not admit a subsequence of almost invariant vectors in $\ell_1$. This happens for example for $p=2$,  if $G$ non-amenable and for $p=1$ if the Poisson boundary of $(G,\mu)$ is non-trivial, for all $n_i$ (\cite{kaimanovichvershik}).
\end{rem}

It is possible that the statement of Remark \ref{rem:changep} remains valid without the assumption of non-almost-invariance.

Instead $(\mu^{*n})^q$ in the Defintion \ref{definitionsubgroup}, one can consider
more generally  a sequence of  
functions $f_i$ and consider 
the difference of corresponding shifted functions, as a function of $g$.

We have already remarked  that for $p=2$, $q=1$, $\mu$ being equidistributed on a finite symmetric set of $G$,  the values of $b_{\mu,U}$ are defined by the unmarked Cayley graph of $G$. In particular, 
 for $p=2$, $q=1$ and $\mu$ being a measure equidistributed on a finite generating set $S$ , the $\ell_p$-thin subgroups can be described in terms of unmarked Cayley graph of $(G,S)$:

\begin{remark} \label{lemmaprobareturn}
$p=2$, $q=1$,  $\mu$ is symmetric measure on $G$. Fix a sequence $\alpha_i$, tending to infinity.
An element $g$ belongs to the subgroup $H_{\mu, 2, 1,  \alpha }$ if and only if
$$
( \mu^{2n}(e) - \mu^{*2n}(g))/\alpha_n^2) \to 0
$$
as $n \to 0$.
In particular, if $\mu$ is equidistributed on a finite symmetric generating set $S$, subgroups
 $H_{\mu, 2, 1, \alpha_i}$ are defined by unmarked Cayley graph of $(G,S)$.

\end{remark}
{\bf Proof.} Observe that $
| g \mu^{*n} -\mu^{*n}|_2^2=  | \mu^{*n}|_2^2 + |g \mu^{*n}|_2^2  - 2 \ip{  \mu^{*n}, g \mu^{*n}} =
2 | \mu^{*n}|_2^2  - 2 \ip{ \mu^{*n}, g \mu^{*n}} =2( \sum_{x\in G} (\mu^{*n}(x))^2 -  
\sum_{x\in G} \mu^{*n}(x)\mu^{*n}(gx))$,
Since $\mu$ is symmetric, this is equal to 
$2( \sum_{x\in G} \mu^{*n}(x) \mu^{*n}(x^{-1}) -  
\sum_{x\in G} \mu^{*n}(gx)    \mu^{*n}(x^{-1}   ) 
=
2( \mu^{2n}(e) - \mu^{*2n}(g))$.
If $\mu$ is equidistributed on a finite symmetric generating set $S$, observe that $\mu^{*2n}(e)$ and $\mu^{*2n}(g)$ are defined by the unmarked Cayley graph of $(G,S)$ and the vertex in this Cayley graph corresponding to $g$.

\begin{rem}
In a particular case when $q=1$, $p=2$ and $G$ is non-amenable, the  main $\ell_2$-thin  subgroup in \ref{definitionsubgroup} coincides with the group, studied by Elder and Rogers in \cite{elderrogers}. However, if $q=1$, $p=2$ and $G$ is amenable, the group
defined in the above cited paper coincides with $G$, while the main $\ell_2$-thin subgroup $H_{\mu,p}$ is never equal to $G$ (for any infinite group $G$). 
\end{rem}

Now assume that $\mu$ has finite support, and consider the mappings $b^{p,q}_{\mu,U}$, defined in the introduction.
Namely, for any non-principal ultrafilter $\cU$ on $\IN$,
put $\alpha^{p,q}(n) = \max_{s\in S} \| (\mu^{*n})^q - s (\mu^{*n})^q \|_p$ 
and 
define the cocycle $b^{p,q}_{\mu,U}\colon G\to\ell_p(G)^U$ by 
\[
b^{p,q}_{\mu,U}(g)=[\alpha^{p,q}(n)^{-1}((\mu^{*n})^q - g (\mu^{*n})^q)]_n \in \ell_p(G)^U.
\]
The cocycle $b^{p,q}_{\mu,U}$ is independent, modulo scalar multiple,
of the choice of the finite generating subset $S$. 
We note that $\ell_p(G)^U$ is an abstract $L_p$-space 
on which $G$ acts isometrically.
Hence $b^{p,q}_{\mu,U}(G)$ is contained in a $G$-invariant 
separable $L_p$-subspace of $\ell_p(G)^U$.

\begin{lemma}  \label{directproducts} 
1)  [Direct products, $q=0$, $p\ge1$]  Let $G$ be
a direct product of $A$ of subexponential growth and $B$ of exponential growth, and let 
$\mu=\mu_A \times \mu_B$ where $\mu_A(e)>0$. Then there exists a subsequence $n_i$ such that 
 subgroup $H_{\mu,0,p}(G) = H_{\mu,p,0}(G) $ contains $A$. Moreover, for any  $n_i$ as above, any ultrafilter $U$ such that $U(n_i)=1$
 for $q=0$ and $p\ge 1$ 
satisfy
$b^{p,q}_{U,\mu} = b^{p,q}_{U,\mu_B}$.

2) [Direct product, $p=q=1$] 
 Let $G$ be a direct product of a group $A$ and $B$;  let $\mu_A$, $\mu_B$ be non-degenerate measures on $A$ and $B$ such that the
  Poisson boundary of a random walk $(A, \mu_A)$ is trivial 
 and Poisson boundary of  $(B,\mu_B)$ is non-trivial. Put
$\mu=\mu_A \times \mu_B$. Then for any choice of $n_i$ the main $\ell_1$-thin subgroup $H_{\mu,1,1} (G)$ contains $A$. Moreover, for any ultrafilter $U$ it holds
$b^{1,1}_{U,\mu} = b^{1,1}_{U,\mu_B}$.

3)  [Direct products, $q=1$, $p=2$]  Let $G$ be  a direct product of an amenable $A$ and non-amenable group $B$, $\mu = \mu_A \times \mu_B$. Then
for any $n_i$, the main $\ell_2$-thin subgroup $H_{\mu,1,2}(G)= H_{\mu,2,1}(G) $ contains $A$.
Moreover, for any ultrafilter $U$ it holds
$b^{2,1}_{U,\mu} = b^{2,1}_{U,\mu_B}$.

\end{lemma}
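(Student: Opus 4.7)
All three parts will be proved by exploiting the product structure $(\mu^{*n})^q=(\mu_A^{*n})^q\otimes(\mu_B^{*n})^q$ on $G=A\times B$ (using the convention $0^0=0$ in part (1)). For $g=(a,e)\in A$ this makes $(\mu^{*n})^q - g(\mu^{*n})^q = ((\mu_A^{*n})^q - a(\mu_A^{*n})^q)\otimes(\mu_B^{*n})^q$ a pure tensor whose $\ell_p$ norm factors as $\|(\mu_A^{*n})^q - a(\mu_A^{*n})^q\|_p\cdot\|(\mu_B^{*n})^q\|_p$; on the other hand, using a $B$-side generator $(e,s_B)$ of $G$ in the definition of $\alpha(n)$ gives $\alpha(n)\ge\|(\mu_A^{*n})^q\|_p\cdot\|(\mu_B^{*n})^q - s_B(\mu_B^{*n})^q\|_p$. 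The ratio to estimate therefore factors into an $A$-piece $\|(\mu_A^{*n})^q - a(\mu_A^{*n})^q\|_p/\|(\mu_A^{*n})^q\|_p$ and a reciprocal $B$-piece $\|(\mu_B^{*n})^q\|_p/\|(\mu_B^{*n})^q - s_B(\mu_B^{*n})^q\|_p$, and the plan in each case is to force the first to $0$ while bounding the second. The ultrafilter claim will then follow from applying the same decomposition to $g=(e,b)$: the ultralimit gives $b^{p,q}_{U,\mu}(e,b)=\xi_U\otimes b^{p,q}_{U,\mu_B}(b)$, where $\xi_U:=\bigl[\|(\mu_A^{*n})^q\|_p^{-1}(\mu_A^{*n})^q\bigr]_n\in\ell_p(A)^U$ is a unit vector whose $A$-invariance is precisely the statement that the first piece vanishes. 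The map $v\mapsto\xi_U\otimes v$ then realises $b^{p,q}_{U,\mu_B}$ as an isometric $G$-equivariant embedding into $\ell_p(G)^U$ whose image is $b^{p,q}_{U,\mu}$, and this is the meaning of the stated equality.

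For parts (2) and (3) the smallness of the first piece is uniform in $n$, so no subsequence is needed. In (2), Kaimanovich--Vershik \cite{kaimanovichvershik} equate triviality of the Poisson boundary of $(A,\mu_A)$ with $\|\mu_A^{*n}-a\mu_A^{*n}\|_1\to 0$, and non-triviality on $(B,\mu_B)$ produces a generator $s_B$ with $\liminf_n\|\mu_B^{*n}-s_B\mu_B^{*n}\|_1>0$; since $\|\mu_A^{*n}\|_1=\|\mu_B^{*n}\|_1=1$ both halves close immediately. In (3), Kesten's theorem \cite{kesten} identifies amenability of $A$ with $\|\mu_A^{*n}-a\mu_A^{*n}\|_2/\|\mu_A^{*n}\|_2\to 0$, and non-amenability of $B$ with the existence of a uniform $c>0$ and a generator $s_B$ such that $\|\mu_B^{*n}-s_B\mu_B^{*n}\|_2\ge c\|\mu_B^{*n}\|_2$; the two pieces again plug straight into the ratio.

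Part (1) is the principal obstacle: both pieces are only controlled along subsequences. The assumption $\mu_A(e)>0$ makes $S_A(n):=\supp(\mu_A^{*n})$ a nested family eventually containing any fixed $a\in A$. From subexponential growth $\log|S_A(N)|=o(N)$, Markov's inequality applied to the telescoping sum $\sum_{n<N}\log(|S_A(n+1)|/|S_A(n)|)$ shows that for every $\ve>0$ the set $\{n:|S_A(n+1)|/|S_A(n)|<1+\ve\}$ has density one. By an analogous averaging argument with a lower bound instead of an upper bound (using $\log|S_B(N)|\ge N\log\lambda$ with $\lambda>1$ the exponential growth rate of $B$, and the fact that differences $\log|S_B(n+1)|/|S_B(n)|$ are bounded above by $\log|S_B(1)|$), the set $\{n:|S_B(n+1)|/|S_B(n)|\ge\lambda^{1/2}\}$ has positive density, and the union bound $|S_B(n+1)\setminus S_B(n)|\le\sum_{s_B\in S}|s_B S_B(n)\setminus S_B(n)|$ over the generating set converts this to $\max_{s_B}|S_B(n)\triangle s_B S_B(n)|\ge c|S_B(n)|$ on the same set. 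A diagonal extraction over a countable enumeration of $A$ then produces $n_i\to\infty$ on which $|S_A(n_i)\triangle aS_A(n_i)|/|S_A(n_i)|\to 0$ simultaneously for every $a\in A$ while the $B$-sphere bound survives, and the scheme of the first paragraph delivers both $A\subseteq H_{\mu,0,p}=H_{\mu,p,0}$ and the ultrafilter identification for any $U$ extending $\{n_i\}$; the equality of the two subgroups on the chosen subsequence follows because $|S_A(n_i)\triangle aS_A(n_i)|$ and $|\supp((\mu_A^{*n_i})^p - a(\mu_A^{*n_i})^p)|$ are comparable up to the nested-support factor.
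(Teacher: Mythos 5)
Your proof follows essentially the same route as the paper's: the tensor factorization $(\mu^{*n})^q=(\mu_A^{*n})^q\otimes(\mu_B^{*n})^q$ reduces each norm comparison to an $A$-piece over a reciprocal $B$-piece; part (1) is handled by density averaging of $\log|S_A(n)|$ against $\log|S_B(n)|$ to extract a subsequence on which the $A$-spheres are negligible while the $B$-spheres remain a definite fraction of the $B$-balls; parts (2) and (3) invoke the Kaimanovich--Vershik boundary criterion and the Kesten/Avez amenability criterion, under which the $A$-ratio already vanishes and the $B$-ratio is bounded below for all $n$, so no subsequence is needed. The two places where you diverge from the paper are improvements in explicitness rather than a different route: you work throughout with the supports $S_A(n)=\supp(\mu_A^{*n})$ where the paper argues with balls $B_{S_A}(n)$ (which matches the $q=0$ definition of the $\ell_p$-thin subgroup only when $\mu_A$ is equidistributed on a symmetric generating set containing $e$); and you make the cocycle identification $b^{p,q}_{U,\mu}=b^{p,q}_{U,\mu_B}$ concrete via the almost-invariant unit vector $\xi_U\in\ell_p(A)^U$ and the $G$-equivariant isometry $v\mapsto\xi_U\otimes v$, where the paper instead writes $g=(e,b)(a,e)$, applies the triangle inequality to the pure tensors, and appeals to an auxiliary remark stated without proof. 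One small imprecision on your side: the diagonal extraction in part (1) should run over a sequence $\ve_k\to 0$ (intersecting each density-one set $\{n:|S_A(n+1)|/|S_A(n)|<1+\ve_k\}$ with the positive-density $B$-set) rather than over a countable enumeration of $A$; once the generators of $A$ enter the thin subgroup along the subsequence, the subgroup property of $H_{\mu,p,q}$ handles the rest of $A$ automatically.
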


{\bf Proof.}
First we prove the claims of 1), 2), 3) about $\ell_p$-thin subgroups.
Observe that  since $B$  is of exponential growth, for any finite set $S$ there exists $v>1$ such that $v_{G,S}(n) \ge v^n$ for all $n$. This implies that
 for each  finite generating set $S_B$ of $B$ and each $C_1 <1$ there exists  $C_2>0$ such that for all $n$
at least $C_1 n$ among balls of radius $i=1, ..., n$ have boundary greater  than $C_2  v_{B,S_B}(i)$.
(Indeed, otherwise $v_{B,S_B}(n) \le R_B^{n(1-C_1)}(1+C_2)^{C_1 n}$, where $R_B$ denotes the cardinality of $B,S_B$,
and taking $C_1$ close to $1$ and $C_2$ close to $0$ we would get a contradiction).

Since  $A$ is of subexponential growth, for each $C$ and any $\epsilon_1, \epsilon_2>0$
at least $(1-\epsilon_2) n$ among the the balls of radius $i=1, ..., n$ have boundary
at most $\epsilon_2  v_A(i)$.

Consider a generating set
 $S=S_A \times S_B$, 
where $S_A$, $S_B$ are generating sets of $A$, $B$ respectively.
We have  $B_S(i)= B_{S_A} (i) \times B_{S_B}(i)$. 
Here $B_{G,S}(i)$ denotes the ball of radius 
$i$ in $G,S$.
Observe also that for $S \in S_A$ it holds 
$s B_{G,S}(i) \setminus B_{G,S}(i) =  s (B_{A,S_A}(i) \times B_{B,S_B}(i)) 
 \setminus (B_{A,S_A}(i) \times B_{B,S_B}(i) )=( s B_{A,S_A}(i)\setminus B_{A,S_A}(i)) \times B_{B,S_B}(i)$,
and the cardinality of this set is at most  $2 (v_{A,S_A}(i) - v_{A,S_A}(i-1)) v_{B,S_B}(i)$, and with the same argument
the cardinality of  $s B_{G,S}(i) \setminus B_{G,S}(i)$, for $s \in S_B$ is at least
$ 2/|S_B| (v_{B,S_B}(i) - v_{B,S_B}(i-1) v_{A,S_A}(i)$ for some $s \in S_B$.
This shows that there exists a sequence $n_i$, tending to infinity,  such that 
$$
\frac{v_{A,S_A}(n_i) - v_{A,S_A}(n_i-1)}{v_{A,S_A}(n_i)}  
\frac{v_{B,S_B}(n_i)}{v_{B,S_B}(n_i) - v_{B,S_B}(n_i-1) } 
$$
tends to $0$ 
as $i$ tends to infinity. 
By Remark  \ref{rem:0}
we know that  for any group
 it holds $H_{\mu,0,p}(G) = H_{\mu,p,0}(G)$.
 Note that for any $n_i$ as above the  this 
thin subgroup $H_{\mu,0,p}(G) = H_{\mu,p,0}(G)$
with respect to a subsequence $n_i$ contains all $s \in S_A$. Therefore, in this case this subgroup contains $A$.

2)  We recall that  $\mu^{*n}= \mu_A^{*n}\mu_B^{*n}$.
Take $a \in A$. Observe that
$a\mu^{*n} -\mu^{*n} = (\mu_A^{*n}- a \mu_A^{*n}) \mu^{*n}(B)$.
It holds therefore $  ||  (\mu^{*n}- a \mu^{*n})  ||_1   =   ||(\mu_A^{*n}- a \mu_A^{*n})||_1$.
Since the non-degenerate walk $(A,\mu_A)$ has trivial Poisson-Furstenberg boundary, for any $a \in A$ it holds
$  ||  (\mu_A^{*n}- a \mu_A^{*n})  ||_1  \to 0$ as $n$ tends to $\infty$, and therefore
$  ||  (\mu^{*n}- a \mu^{*n})  ||_1  \to 0$ as $n$ tends to $\infty$ (see Kaimanovich Vershik \cite{kaimanovichvershik}). The above mentioned characterization also shows that since the Poisson boundary of $(B,\mu_B)$ is non-trivial, there exists $b\in B $ such that
$  ||  (\mu_B^{*n}- b \mu_B^{*n})  ||_1 \ge c>0$, and hence $||  (\mu^{*n}- b \mu^{*n})  ||_1 \ge c>0$ for some positive constant $c$ and all $n$.

3)  For $g=(g_1, g_2)$, $g_1 \in A$, $g_2 \in B$, 
$\mu^{*n}(g_1, g_2) = \mu_A^{*n}(g_1) \mu_B^{*n}(g_2)$. For $h \in A$, $\mu^{*n}(h(g_1, g_2)) / \mu^{*n}(g_1,g_2)=
\mu_A^{*n}(hg_1)/\mu_A^{*n}(g_1) \to 1$ as $n \to  \infty$, by \cite{avez} since $A$ is amenable \cite{avez}.
Analogously,   for $h\in B$ it holds $\mu^{*n}(h(g_1, g_2)) / \mu^{*n}(g_1,g_2)=\mu_B^{*n}(hg_2)/ \mu_B^{*n}(g_2)
 \to C_h$, where $C_h \ne 1$ for some $h$ among generators of $B$, since $B$ is non-amenable \cite{avez}.
This implies that the scaling sequence $\alpha(n)$ is equivalent up to multiplicative constant to  $\mu^{*n}(e)= \mu_A^{*n}(e) \mu^{*n}_B(e)$. 
Using Remark \ref{lemmaprobareturn} we conclude that  for all $s \in S_A$ $||s \mu^{*n} -\mu^{*n}||_{2}/\alpha(n) \to 0$, and hence any $s \in A$, $s$ belongs to the $\ell_2$ thin subgroup for $q=1$, $p=2$.
By Remark \ref{rem:changep} we know that 
under assumption of 3) it holds $H_{\mu,1,2}(G) = H_{\mu,2,1}(G)$. 

Now to prove the claims about the cocycles, take  $g= (a, b ) \in A \times B$, put $g'=(e,b)$ and $g''=(a,e)$. It holds $g= g'g''$. Under the assumption on $p$ and $q$ in 1), 2), 3) observe  that 
$$
           || (\mu^{* n_i})^q - 
g' (\mu^{* n_i})^q||_p - || (\mu^{* n_i})^q - 
g'' (\mu^{* n_i})^q||_p     \le  || (\mu^{* n_i})^q - 
g (\mu^{* n_i})^q||_p  \le
$$
$$
\le  || (\mu^{* n_i})^q - 
g' (\mu^{* n_i})^q||_p + || (\mu^{* n_i})^q - 
g'' (\mu^{* n_i})^q||_p
$$
and that $ || (\mu^{* n_i})^q - 
g' (\mu^{* n_i})^q||_p = || (\mu_B^{* n_i})^q - 
g' (\mu_A^{* n_i})^q||_p  || \mu_A^{n_i}  ||_p$.
This allows us to use 1) of Remark \ref{rem:cocyclesproduct} and completes the proof of the Lemma.

\begin{rem}\label{rem:cocyclesproduct}

$G=A \times B$, $\mu=\mu_A \times \mu_B$, $S= S_A \times S_B$, $S_A$ and $S_B$ are finite generating sets of $A$ and $B$.

1) Let $\alpha^{p,q}_G(n)$ be the maximal $\ell_p$ norm of $(\mu^{*n})^q - s(\mu^{*n})^q$, where the maximum is taken over $s\in S$; and 
let $\alpha^{p,q}_A(n)$ be the maximal $\ell_p$ norm of $(\mu_A^{*n})^q - s(\mu_A^{*n})^q$, the maximum is over $s\in S_A$ and   $\alpha^{p,q}_B(n)$ is defined analogously.
Let $\tilde{\theta}_\mu(n)$ be equal to $\alpha^{p,q}_G(n)$ divided by the $\ell_p$ norm
of $(\mu^{*n})^q$.
If $\tilde{\theta}^{p,q}_A(n_i) /\tilde{\theta}^{p,q}_B(n_i)$ tends to zero for some sequence $n_i$ and $U$ is 
a non-principal ultrafilter such that $U(\{n_i \})=1$, then
$b^{p,q}_{\mu,U} = b^{p,q}_{\mu_B,U}$.

2) Take $q=1$, $p=2$. 
Put $\theta(n):=  (\mu^{* 2n} - \mu^{*2n+1})/\mu^{*2n}$. Then
$ \theta(n)= \tilde{\theta}_\mu(n)^2$. In particular, if
$\theta_A(n_i)/ \theta_B(n_i)$ tends to zero and $U$ is a non-principal ultrafilter such that $U(\{n_i\})=1$,
then the corresponding harmonic cocycle is defined by that of $B$, that is
$b_{\mu,U} = b_{\mu_B,U}$.

\end{rem}

\begin{rem} The fact that $A \times B$, $A$ is of subexponetial growth, $B$ is of exponential growth,  satisfy the claim of 1), Lemma \ref{directproducts}  not only for some sequence $n_i$ but for all sequences
can be shown to be  equivalent to a positive answer to both following questions A): is it true that no subset of balls is a Foelner sequence in $A$? B): Is it true that all balls form a Foelner sequence in $A$?

To our knowledge, it is not known whether to answer to  A) is positive for all groups of exponential growth (this question is mentioned e.g. in \cite{tesseravolume}), 
 and whether the answer to  B) is positive for all groups of subexponential growth.
\end{rem}

\begin{example}[Dependance of $\ell_p$-thin subgroups on $p$] \label{dependancep}

Let $G= F_m \times \Z^d \wr A$, where $m \ge 2$, $d\ge 3$ and $A$ is a finite group containing at least two elements.
Let $\mu$ be a  non-degenerate symmetric finitely supported measure.
Then $\ell_2$-thin subgroup is not equal to $\ell_1$-thin subgroup.
\end{example}

{\bf Proof.}  Observe that the   $\ell_2$-thin subgroup $H_{\mu, 1, 2} =H_{\mu, 2, 1}$  contains $\Z^d \wr A$  by 3) of Lemma \ref{directproducts} 
(in fact, it is equal to  $\Z^d \wr A$), while there exists $g \in \Z^d \wr A$ which does not belong to 
$\ell_1$-thin subgroup  since the Poisson boundary of $\Z^d \wr A$ is non-trivial.

\begin{remark}\label{rem:notnormal}
Let $G = C \wr A$, where $C$ is an infinite group of at least cubic growth  and $A$ is a finite group containing at least two elements.
Let $\mu$ be a symmetric finitely supported  "switch-walk-switch" measure on  $G$.
One can show that 
 $H_{\mu,1,1}$ is a finite subgroup of $G$.  One can also show that 
 for any integer $k\ge 0$ there exists $\mu$ as above such that
$H_{\mu,1,1}$ is isomorphic to $A^m$. In particular, this main $\ell_1$ thin subgroup $H_{\mu,1,1}$ depends on the choice of a finitely supported symmetric measure $\mu$ and this subgroup is not normal.
\end{remark}

{\bf Proof of Theorem \ref{dependencesubsequence}.}  Assume $d=2$ (the general case $d\ge 2$ is analogous).

We construct $G_1$ and $G_2$  as piecewise-automatic groups with returns  of automata $\tau_1$, $\tau_2$, where $\tau_1, \tau_2 : A\times X \to A$, the group generated by $(A,\tau_1)$ is of intermediate growth, $\tau_2: A \times X \to A$,  the group $H_2$ generated by $(A,\tau_2)$ is non-amenable,  and the action of $A$, considered as generators of $H$, is contracting for the action of $\tau_1$ for each brach of the rooted tree (see \cite{erschlerpiecewise}).

More precisely, we chose automata $\tau_1$ and $\tau_2$ with the following properties: $\tau_2$ is  a finite state automaton, containing $e,a,b, c, d$ as its states, such that $e$ acts trivially and $a,b,c,d$ generate the free product $\mathbb{Z}/2\mathbb{Z} * (\mathbb{Z}/2\mathbb{Z}   + \mathbb{Z}/2\mathbb{Z})$ in the group generated by $\tau_2$.

 If the  states of $\tau_2$ are $e, a, b, c, d$ and the alphabet is $0$, $1$, we take 
as $\tau_1$ the standard finite state automaton  for the first Gigorchuk group ($A=\{e,a,b,c,d \}$, $X=\{0, 1\}$.
In this case we can  take as $G_1$ and $G_2$ either piecewise-automatic group or a piece-wise automatic group with returns defined by $\tau_1$, $\tau_2$ and $t_i, T_i$, $i\ge 1$, $T_{i-1} < t_i < T_{i}$.
We do not know if $\tau_2$ as above exists, and therefore we consider as in \cite{erschlerpiecewise}
an automaton $\tau_2$ with the space of states  possibly larger than $e, a, b,c,d$ (such automata exist by the result of Olijnyk \cite{olijnyk}, that shows that any free product of finite groups imbeds in a group generated by a finite state automaton), and
we take as $\tau_1$ the standard finite state automaton for the first Grigorchuk group,
(extended to some larger alphabet than $0$ and $1$ if the alpaheth of $\tau_2$ contains more than two letters) 
and consider the corresponding piecewise automatic group with returns $G_{\tau_1, \tau_2}(t_i, T_i)$.

To construct $G_1$ and $G_2$, we fix $\tau_1$, $\tau_2$ and construct sequences $t^1_i, T^1_i$  
and 
$t^2_i, T^2_i$  ($T^1_{i-1} < t^1_i < T^1_{i}$,  $T^2_{i-1} < t^2_i < T^2_{i}$)
by a simultaneous inductive procedure and we put 
$G_1=G_{\tau_1, \tau_2}(t_i^1, T^1_i)$ and $G_2=G_{\tau_1, \tau_2}(t_i^2, T^2_i)$ .
 
We need the following  properties of  piece-wise autmatic group with returns $G_{\tau_1, \tau_2}(t_i, T_i)$
(see proof of Proposition $1$ in \cite{erschlerpiecewise}). There exist $\Psi: \mathbb{N} \to \mathbb{N}$ and for each $i$ there exist "comparison groups" $\mathcal{A}(t_1, T_1, t_2, T_2, \dots t_i)$ and $\mathcal{B}(t_1, T_1, t_2, T_2, \dots t_i, T_i)$, such that the following holds for all non-decreasing sequences $t_i$, $T_i$: 

\begin{enumerate}

\item all groups  $\mathcal{A}(t_1, T_1, t_2, T_2, \dots T_{i-1})$ have a finite index subgroup which imbeds as a subgroup in a finite direct power of the the first Grigorchuk group $G_1$ (generated by $(A, \tau_1)$,

\item all groups  $\mathcal{B}(t_1, T_1, t_2, T_2, \dots T_{i-1}, t_i)$ have a finite index subgroup which admits a surjective homomorphism  to the group, generated by the automaton $(A,\tau_2)$,

\item 
the balls of radius $\Psi(t_i)$ in 
  $G(t_1, t_2, ... , T_1, T_2, ...)$  and $\mathcal{A}(t_1, T_1, t_2, T_2, \dots t_{i-1}, T_{i-1})$ coincide,

\item the balls of radius $\Psi(T_{i})$ in 
  $G(t_1, t_2, ... ,T_1, T_2, ...)$  and $\mathcal{B}(t_1, T_1, t_2, T_2, \dots T_{i-1},  t_i,)$ coincide.

\end{enumerate}

Let $G,S_G$, $H, S_H$ be finitely generated groups such that the balls of radius $R+C$  in the marked Cayley graphs of $G,S_G$, $H, S_H$ coincide. Let $\mu_H$ and $\mu_G$ are measures which are equal after the identifications of these balls and such that $ l_G(s) \le C$ for any $s$ in the support of $\mu_G$. Observe that for any $n \le R$ the scaling functions in the definition of $\ell_p$-thin subgroups are equal : $\alpha_{G,\mu_G, p}(n)= 
\alpha_{H,\mu_H, p}(n)$,  $\alpha'_{G,\mu_G, p}(n)= 
\alpha'_{H,\mu_H, p}(n)$ and for each $g$ in the ball of radius $C$ in the Cayley graph of $(G,S_G)$ $\ell_p$ norms of $g (\mu_G^{*n})^q - (\mu_G^{*n})^q$ 
are equal to the $\ell_p$ norm of $h(\mu_H^{*n})^q - (\mu_H^{*n})^q$ for $h$ being the corresponding element in the ball of radius $C$ of $(H, S_H)$.

Suppose that we have chosen already  $t^1_1, T^1_1, t^1_2, T^1_2, \dots T^1_{i-1}$ and 
$t^2_1, T^2_1, t^1_2, T^2_2, \dots t^2_i$.  For any $\epsilon>0$ there exist $M_i$ such that for all $M_i'>M_i$ there exists $M_i^{*}$  with 
the following property. For any $t^1_i>M_i^{*}$ and $T^2_{i}>M_i^{*}$ and any $n: M_i < n <M_i'$
the ratio of $\ell_p$ norms $s_1 (\mu^{*n})^q -  (\mu^{*n})^q$  and $s_2 (\mu^{*n})^q -  (\mu^{*n})^q$ in $G=G_1\times G_2$
is smaller than $\epsilon$ for all $s \in S_1$ and some $s\in S_2$.

To prove this , we combine the  observation about Cayley graphs above with  the claims 1), 2), 3) of Lemma \ref{directproducts}, 
for
$\mathcal{A}=\mathcal{A}(t^1_1, T^1_1, t^1_2, T^1_2, \dots T^1_{i-1})$, $\mathcal{B}= \mathcal{B}(t^2_1, T^2_1, t^1_2, T^2_2, \dots t^2_i)$. Tthe group $A$ is of intermediate growth and hence  this group is  amenable and finitely supported random walks have trivial boundary, $B$ has a finite index subgroup sujecting to a non-amenable group, and hence non-amenable.

Now suppose that we have chosen already $t^1_1, T^1_1, t^1_2, T^1_2, \dots t^1_i$ and 
$t^2_1, T^2_1, t^1_2, T^2_2, \dots t^2_i, T^2_i$.
For any $\epsilon>0$ there exist $N_i$  such that for all $N_i'>N_i$ there exists $N_i^{*}$  with 
the following property.
For any $T^1_{i} > N_i^{*}$ and $t^2_{i+1}>M_i^{*}$ and any $n: N_i < n <N_i'$
the ratio of $\ell_p$ norms of $s_2 (\mu^{*n})^q -  (\mu^{*n})^q$  and $s_1 (\mu^{*n})^q -  (\mu^{*n})^q$ in $G=G_1\times G_2$
is smaller than $\epsilon$ for all $s_2 \in S_2$ and some $s_1\in S_1$.

This implies that  for some choice of $t^1_i$, $T^1_i$ and  $t^2_i$, $T^2_i$
there exist  sequences $n_i, m_i$ tending to infinity, such that the following holds.
 The ratio of $\ell_p$ norms of $s_1 (\mu^{*n_i})^q -  (\mu^{*n_i})^q$ and the scaling sequence $\alpha(n_i)$ tend to $0$ for all $s_1\in S_1$. This implies that  all $s_1\in S_1$, as well as all $g\in G_1$ belong to the main $\ell_p$ thin subgroup  $H_{\mu, p,q}$ ,  corresponding to $n_i$.
The ratio of $\ell_p$ norms of $s_2 (\mu^{*m_i})^q -  (\mu^{*m_i})^q$ and the scaling sequence $\alpha(m_i)$ tend to $0$ for all $s_2 \in S_2$. This implies that  all $s_2\in S_e$, as well as all $g\in G_2$ belong to the main $\ell_p$ thin subgroup  $H_{\mu, p,q}$ , corresponding to $m_i$.  Consider an ultrafilter $U_m$ such that $U(m_i)=1$ and an ultrafilter $U_n$ such that $U(n_i)=1$.
Using 1), 2), 3) of Lemma \ref{directproducts} we also observe that $b^{p,q}_{\mu,U_n}$ is equal to
$b_{\mu_2,U_n}^{p,q}$  and that  $b_{\mu,U_m}^{p,q}$ is equal to
$b^{p,q}_{\mu_1,U_m}$.

\begin{corollary}\label{cor:lpthin}
Let $G_i$, $\mu_i$ be as in the formulation of Theorem \ref{dependencesubsequence}.
Take $q=0$, $1$ or $2$ and $p=1$ or $2$. For each $j: 1 \le j \le D$
there exists $n_{i,j}$ such that for all the main $\ell_p$-thin subgroup $H_{\mu, p, q} $of $G$ with respect to $n_{i}= n_{i,j}$ contains
$\prod _{k: k \ne j} G_k$. In particular, there exist at least $D$ not equal $\ell_p$-thin subgroups. 
\end{corollary}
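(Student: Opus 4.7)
The plan is to deduce this as an almost immediate reformulation of Theorem \ref{dependencesubsequence} in the language of Definition \ref{definitionsubgroup}, together with a short argument for pairwise distinctness.

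For each $j \in \{1,\dots,D\}$, Theorem \ref{dependencesubsequence} furnishes a non-principal ultrafilter $U_j$ on $\IN$ such that $b^{p,q}_{\mu,U_j}$ factors through $G \twoheadrightarrow G_j$; equivalently, $b^{p,q}_{\mu,U_j}(g)=0$ for every $g$ in $N_j:=\ker(G\to G_j)=\prod_{k\ne j}G_k$. Unwinding the construction of $b^{p,q}_{\mu,U_j}$ as the normalized $U_j$-ultralimit of $(\mu^{*n})^q-g(\mu^{*n})^q$ in $\ell_p(G)$, this is exactly the statement that
\[
\lim_{n\to U_j}\ \frac{\|(\mu^{*n})^q-g(\mu^{*n})^q\|_p}{\alpha(n)}=0 \qquad (g\in N_j).
\]

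Next I would convert this ultrafilter vanishing into subsequential convergence holding simultaneously for all $g\in N_j$. Since each $G_k$ is finitely generated, $G$ is countable and so is $N_j$; enumerate $N_j=\{g_1,g_2,\dots\}$. For every integer $i\ge1$ the set
\[
A_i:=\Bigl\{n\in\IN:\max_{k\le i}\ \|(\mu^{*n})^q-g_k(\mu^{*n})^q\|_p/\alpha(n)<1/i\Bigr\}
\]
lies in $U_j$ as a finite intersection of members of $U_j$, hence is infinite; a standard diagonal choice $n_{1,j}<n_{2,j}<\dots$ with $n_{i,j}\in A_i$ yields a subsequence along which the ratio tends to $0$ for every $g\in N_j$. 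By Definition \ref{definitionsubgroup} this is exactly $N_j\subset H_{\mu,p,q,n_{i,j}}$, giving the first assertion of the corollary.

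For the distinctness claim, observe that for $j\ne j'$ the subgroups $N_j$ and $N_{j'}$ together contain every factor $G_k$, so $\langle N_j,N_{j'}\rangle=G$; if two of the subgroups $H_{\mu,p,q,n_{i,j}}$ coincided, their common value would therefore be all of $G$. However, by definition $\alpha(n)$ is realized as the maximum over the finite set $S$ of the quantities $\|(\mu^{*n})^q-s(\mu^{*n})^q\|_p$, so for each $n$ some $s\in S$ satisfies $\|(\mu^{*n})^q-s(\mu^{*n})^q\|_p/\alpha(n)=1$; since $S$ is finite, the ratio cannot tend to $0$ along a subsequence for every $s\in S$, so no main $\ell_p$-thin subgroup contains $S$ and in particular none equals $G$. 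Hence the $D$ subgroups $H_{\mu,p,q,n_{i,j}}$ are pairwise distinct. The substantive difficulty is entirely upstream, inside Theorem \ref{dependencesubsequence}; the corollary itself only requires the diagonal extraction and the elementary normalization observation that forces each main $\ell_p$-thin subgroup to be proper.
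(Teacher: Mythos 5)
Your proof is correct, and it takes a somewhat different route from the paper's. The paper does not give a separate argument for Corollary~\ref{cor:lpthin}: the sequences $n_{i,j}$ are produced explicitly inside the proof of Theorem~\ref{dependencesubsequence}, where the convergence $\|s(\mu^{*n_{i,j}})^q-(\mu^{*n_{i,j}})^q\|_p/\alpha(n_{i,j})\to 0$ for all generators $s$ of $\prod_{k\ne j}G_k$ is established directly as part of the construction of the $t^j_i,T^j_i$; the corollary then merely restates what that proof already showed. You instead treat Theorem~\ref{dependencesubsequence} as a black box: from the ultrafilter vanishing $\lim_{U_j}\|(\mu^{*n})^q-g(\mu^{*n})^q\|_p/\alpha(n)=0$ on the countable normal subgroup $N_j=\prod_{k\ne j}G_k$, you extract a single subsequence via a diagonal argument, using that every member of a non-principal ultrafilter is infinite. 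This cleanly separates the corollary from the internals of Theorem~C's proof and would survive any alternative proof of that theorem. Your distinctness argument is also correct and fills in a step the paper leaves implicit: for $j\ne j'$ the subgroups $N_j$ and $N_{j'}$ together generate $G$, while no main $\ell_p$-thin subgroup can contain all of $S$ (hence can equal $G$), because by finiteness of $S$ some $s\in S$ realizes $\alpha(n)=\|s(\mu^{*n})^q-(\mu^{*n})^q\|_p$ for infinitely many $n$ of any given subsequence, so the normalized ratio for that $s$ does not tend to $0$.
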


\end{document}